\theoremstyle{plain}
\newtheorem{thm}{Theorem}
\newtheorem{lemma}[thm]{Lemma}
\newtheorem{corollary}[thm]{Corollary}
\newtheorem*{theoremaux}{Theorem \theoremauxnum}
\newtheorem*{unlabeledthm}{Theorem}
\gdef\theoremauxnum{1}
\newtheorem{conj}{Conjecture}
\def\Q{\mathbb{Q}}
\def\PP{\mathbb{P}}
\def\lex{{\operatorname{lex}}}
\def\cstd{\operatorname{cstd}}
\def\Sym{\boldsymbol{\operatorname{Sym}}}
\def\QSym{\boldsymbol{\operatorname{QSym}}}
\def\Sm {\mathfrak{S}}
\def\Des{\operatorname{Des}}
\def\leqlex{ \leq_\lex  }
\def\lesslex{ <_\lex  }
\title{\bf  Chromatic symmetric functions of hypertrees  }
\author{Jair Taylor\thanks{The author was partially supported by the National Science Foundation grant DMS-1101017.}}
\date{\today}
\begin{document}
\maketitle
\begin{abstract}
The chromatic symmetric function $X_H$ of a hypergraph $H$ is the generating function for all colorings of $H$ so that no edge is monochromatic.  When $H$ is an ordinary graph, it is known that $X_H$ is positive in the fundamental quasisymmetric functions $F_S$, but this is not the case for general hypergraphs.  We exhibit a class of hypergraphs $H$ --- hypertrees with prime-sized edges --- for which $X_H$ is $F$-positive, and give an explicit combinatorial interpretation for the $F$-coefficients of $X_H$.
\end{abstract}
\begin{section}{Introduction}\label{sec:intro}

In \cite{chromaticsymmetric}, Stanley defined the \emph{chromatic symmetric function} of a graph $G$, and since then this invariant has been an object of much study \cite{gasharov, wachschromatic1, wachschromatic2, prietocaterpillars, saganchromatic}.  A \emph{proper coloring} of an ordinary graph $G = (V,E)$ is a map $\chi: V \rightarrow \PP = \{1,2, \ldots\}$ so that if $\{u,v\} \in E$ then $\chi(u) \neq \chi(v)$.  When $V$ is finite, we define the chromatic symmetric function $X_G$ to be a formal power series in the commuting indeterminates $x_1, x_2, \ldots$ given by taking the sum over all proper colorings of $G$ where a vertex colored $i$ is assigned the weight $x_i$.  That is, $$X_G = \sum_{\chi   }  \prod_{v \in V} x_{\chi(v)}$$ with the sum over all proper colorings $\chi$ of $G$.

A formal power series (over, say, $\Q$) of bounded degree in $x_1, x_2, \ldots$  is called a \emph{symmetric function} if it remains the same after any permutation of its variables.  The chromatic symmetric function $X_G$ is indeed symmetric, since the properness of a coloring is preserved under any permutation of the set of colors $\PP$.  It is natural, then, is to consider the expansion of $X_G$ in various bases of the ring of symmetric functions $\Sym$, and there are a number of conjectures and open problem concerning positivity of $X_G$ in these bases.  

Our present interest is in the larger ring $\QSym$ of \emph{quasisymmetric functions}.  A formal power series of bounded degree $X$ in the variables $x_1, x_2, \ldots$ is called \emph{quasisymmetric} if the coefficient of $x_{i_1}^{\alpha_1} x_{i_2}^{\alpha_2} \cdots x_{i_k}^{\alpha_k}$ in $X$ is the same as the coefficient of $x_{j_1}^{\alpha_1} x_{j_2}^{\alpha_2} \cdots x_{j_k}^{\alpha_k}$ in $X$ whenever $i_1 < \ldots < i_k$ and $j_1 < \ldots <j_k$.  The vector space $\QSym_n$ of quasisymmetric functions of degree $n$ has dimension $2^{n-1}$ and has the basis of \emph{fundamental quasisymmetric functions} $F^n_{S}$ indexed by subsets $S \subseteq [n-1]$, defined by
$$F^n_S = \sum_{i_1, i_2, \ldots, i_n} x_{i_1} x_{i_2} \cdots x_{i_n}$$
with the sum over all weakly increasing sequences $i_1 \leq i_2\leq \ldots\leq i_n$ of positive integers with the restriction that if $j \in S$ then $i_j < i_{j+1} $.  In what follows all the symmetric and quasisymmetric functions are homogeneous, so we will write $F_S = F_S^n$ without ambiguity.
If $X$ is a symmetric function then it is also quasisymmetric, so we may consider the coefficients $a_{S}$ in the expansion $X = \sum_{S\subseteq [n-1]} a_{S} F_{S}$.  If each $a_{S}$ is nonnegative, we will say that $X$ is $F$-positive.  In \cite{chromaticsymmetric}, Stanley used the theory of $P$-partitions to show that $X_G$ is always $F$-positive; in that case the $F$-coefficients count linear extensions of posets defined by acyclic orientations of $G$.

In \cite{hypersymmetric}, Stanley presented a generalization of the chromatic symmetric function to \emph{hypergraphs}.   A hypergraph is a graph where the edges are allowed to contain more than two elements; that is, a hypergraph is a pair $H = (V,E)$, $V$ finite, where $E$ is a family of subsets of $V$ called the hyperedges (or just edges) of $H$ with $|e| > 1$ for each $e \in E$.  We say $e \in E$ is \emph{monochromatic} under the coloring $\chi: V \rightarrow \PP$ if $\chi$ maps all of $e$ to a single color $i$, so that $\chi(v) = i$ for all $v \in e$, and a coloring $\chi$ of $H$ is \emph{proper} if no edge of $H$ is monochromatic under $\chi$.  It might seem more natural to require that each $v \in e$ have a different color. However, as Stanley notes in \cite{hypersymmetric}, these colorings would be the same as the proper colorings of the ordinary graph obtained by replacing each hyperedge $e \in E$ by the complete graph on $e$, so nothing new would be gained by considering hypergraphs.

The chromatic symmetric function $X_H$ is then defined, as before, by 
$$X_H = \sum_{\chi}  \prod_{v \in V} x_{\chi(v)}$$
where the sum is taken over all proper colorings $\chi$ of $H$.

Again $X_H$ is symmetric, but unlike in the case of ordinary graphs, $X_H$ is not always $F$-positive.  For example, if $H = (V,E)$ where $V = \{1,2,3,4\}$ and $E = \{\{1,2,3\}, \{2,3,4\}\}$ then 
\begin{align}\label{exampleXH}
X_H = 2F_{ \{ 1\}} + 6F_{\{2\}} + 2F_{\{3\}} + 4F_{\{1,2\}} + 8F_{\{1,3\}} + 4F_{\{2,3\}} - 2F_{\{1,2,3\}}
\end{align}
is not $F$-positive.  The reader might observe that the coefficients in \eqref{exampleXH} sum to $24 = 4!$, and this is not a coincidence.  The sum of the $F$-coefficients in a chromatic symmetric function $X_H$ will always be $n!$ where $n = |V|$, and this can be seen by considering the coefficient of $x_1x_2\ldots x_n$.  Thus when $X_H$ is $F$-positive, we might expect then to be able to write $X_H$ as a sum of fundamental quasisymmetric functions indexed by permutations, and this is what we proceed to do for a certain class of hypergraphs: the \emph{hypertrees} with prime-sized edges.  There are a number of closely-related definitions of \emph{hypertree} occurring in the literature; we adopt the definition given in \cite{gesselkalikowhypertrees}.  


Let $H = (V,E)$ be a hypergraph.  A \emph{path} in $H$ is a nonempty sequence $$v_1, e_1, v_2, e_2, \ldots, e_m, v_{m+1}$$ where each $e_i \in E$ with $v_i, v_{i+1} \in e_i$ and the edges $e_i$ and vertices $v_i$ of the path are distinct, except that we allow $v_1 = v_{m+1}$.  If $v_1 = v_{m+1}$ and $m > 1$ we say the path is a \emph{cycle}.  We say that a hypergraph $H = (V,E)$ is \emph{connected} if there is a path from $v$ to $v'$ for any given $v, v' \in V$.  A \emph{hypertree} is a hypergraph that is connected and has no cycles.  Thus in a hypertree there is a \emph{unique} path between any two distinct vertices.  A hypergraph $H$ is called \emph{linear} if $|e \cap e'| \leq 1$ for any distinct edges $e, e' \in E$.  Hypertrees are linear, for if there are distinct $v_1, v_2 \in e_1 \cap e_2$ for $e_1 \neq e_2$ then there is a cycle $v_1, e_1, v_2, e_2, v_1$.  Figure 1 depicts a hypertree.

\begin{figure}[htbp]\label{fig:hypertreepic}
	\vspace{-30pt}
  \begin{centering}
    \includegraphics[width=.6\textwidth ]{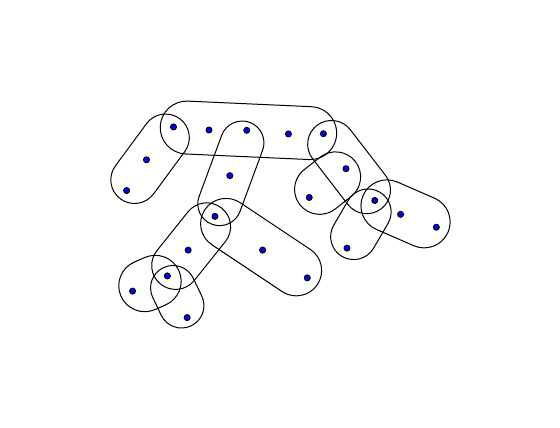}
	\vspace{-40pt}
    \caption{A hypertree with edges $e \in E$ circled.}
  \end{centering}
\vspace{-10pt}
\end{figure}

Our main result is the following fact, appearing as Theorem \ref{bigthm} in Section \ref{sec:comb}.
\begin{unlabeledthm}
Let $H = (V,E)$ be a hypertree so that $|e|$ is a prime number for each edge $e \in E$.  Then $X_H$ is $F$-positive.  In particular,
$$X_H = \sum_{\pi \in \Sm_V}  F_{\Des_H(\pi)}$$
where $n = |V|$ and $\Des_H(\pi)$ is the set of \emph{$H$-descents} of the permutation $\pi$, to be defined in Section \ref{sec:comb}.
\end{unlabeledthm}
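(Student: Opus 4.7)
The plan is to prove the identity bijectively. Unpacking the definition of the fundamental quasisymmetric functions gives
\begin{equation*}
\sum_{\pi \in \Sm_V} F_{\Des_H(\pi)} = \sum_{(\pi,\, c)} \prod_{i=1}^{n} x_{c_i},
\end{equation*}
where the inner sum runs over pairs $(\pi, c)$ with $\pi \in \Sm_V$ and $c = (c_1 \leq \cdots \leq c_n)$ a weakly increasing sequence of positive integers satisfying $c_i < c_{i+1}$ whenever $i \in \Des_H(\pi)$. Identifying each such pair with the coloring $\chi : V \to \PP$ defined by $\chi(\pi(i)) = c_i$, the theorem becomes the statement that every proper coloring of $H$ arises from exactly one pair $(\pi, c)$.

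Given a proper coloring $\chi$, the permutations $\pi$ for which $\chi \circ \pi$ is weakly increasing are precisely those that sort $V$ by color, and there are $\prod_j |\chi^{-1}(j)|!$ of them, parametrized by internal orderings of the color classes $V_j := \chi^{-1}(j)$. Since $\chi \circ \pi$ strictly increases only at the boundaries between classes, the remaining constraint forbids $H$-descents strictly inside any monochromatic block, and the goal becomes showing that exactly one sorted $\pi$ avoids all such internal descents. Because each $V_j$ is independent in $H$ and $H$ is linear --- both consequences of being a hypertree --- the $H$-descents inside a color class should depend only on that class's ordering and its local neighborhood, so the problem decouples across color classes. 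The technical heart of the proof will therefore be a lemma of the form: for each independent set $S \subseteq V$, there is a unique linear ordering of $S$ admitting no internal $H$-descent.

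For a single hyperedge $e$ of prime size $p$, the identity reads $X_e = p_1^p - p_p$, where $p_k := \sum_i x_i^k$ is the $k$-th power sum; this is where primality enters. Expanding in the fundamental basis via the standard identities
\begin{equation*}
p_1^p = \sum_{\sigma \in \Sm_p} F_{\Des(\sigma)} \qquad \text{and} \qquad p_p = \sum_{S \subseteq [p-1]} (-1)^{|S|} F_S,
\end{equation*}
one finds that the $F$-coefficients of $X_e$ are nonnegative and in fact divisible by $p$, reflecting a free $\mathbb{Z}/p$-action on orderings of $e$; defining $\Des_e$ equivariantly under this action yields the single-edge bijection. The hypertree case then proceeds by induction on the number of hyperedges: a hypertree always contains a leaf edge $e$ sharing exactly one vertex $v$ with the rest $H' = H \setminus e$, and the inductive bijection for $X_{H'}$ can be combined with the single-edge bijection for $X_e$ by gluing along $v$.

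The main obstacle will be the uniqueness lemma on orderings of independent sets. Primality is indispensable: when $|e|$ is prime, the $\mathbb{Z}/|e|$-action on orderings of $e$ is free, so each orbit produces exactly one good ordering, whereas composite $|e|$ would introduce nontrivial stabilizers yielding either multiple good orderings or none. The example \eqref{exampleXH} --- whose two edges have the prime size $3$ but which fails to be a hypertree because they meet in two vertices, forming a cycle --- further shows that primality alone does not suffice: acyclicity is needed to make the inductive gluing at shared vertices internally consistent.
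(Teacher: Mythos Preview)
Your proposal is a plan rather than a proof, and its central step --- the claim that the problem ``decouples across color classes'' --- is false as stated. Whether position $i$ inside a color class $V_j$ is an $H$-descent is decided by comparing $\pi(c_{j_r}(v_r))$ with $\pi(c_{j_r}(v_{r+1}))$, where $v_r, v_{r+1}$ are the vertices of the least-index edge $e_{j_r}$ on the path from $\pi^{-1}(i)$ to $\pi^{-1}(i+1)$. The vertices $c_{j_r}(v_r)$ and $c_{j_r}(v_{r+1})$ need not lie in $V_j$, and when both lie in some other color class $V_k$, the presence or absence of this $H$-descent depends on how $V_k$ is ordered. Concretely, take $e_1 = \{a,b,c\}$, $e_2 = \{a,d,e\}$, $e_3 = \{b,f,g\}$ with $c_1 = (a\,b\,c)$, and color $d,f$ with $1$ and $b,c$ with $2$ (and $a,e,g$ with $3$): the path from $d$ to $f$ meets $e_1$ at $a,b$, so the ordering $(d,f)$ of the first color class has an internal $H$-descent iff $\pi(b) > \pi(c)$, i.e.\ iff $c$ precedes $b$ in the second color class. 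Thus your uniqueness lemma --- ``for each independent set $S$ there is a unique ordering of $S$ with no internal $H$-descent'' --- is not well-posed as a statement about $S$ alone, and the proposed reduction to color classes does not go through.

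The paper avoids this coupling by decomposing edge-by-edge rather than color-by-color. For a single prime edge the bijection is Gessel--Reutenauer \emph{cyclic standardization}: to a nonconstant word $w$ one assigns the permutation $\pi = \cstd(w)$ that sorts the rotations of $w$ lexicographically, and one checks directly that $\cstd(w) = \pi$ iff $w \in A(\pi, \Des(\pi c \pi^{-1}))$. For a hypertree, a proper coloring $\chi$ then determines a tuple $(\pi_1, \ldots, \pi_k)$ of edge-local permutations, and the set of $\chi$ with a fixed tuple is precisely the set of $(P,\omega)$-partitions for the poset $P$ obtained by transitively closing the orders $\pi_i$ (acyclicity makes this a poset). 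The Fundamental Theorem of $(P,\omega)$-partitions then refines each such piece into sets $A(\sigma, \Des(\omega\sigma^{-1}))$ indexed by linear extensions $\sigma$, giving $F$-positivity; the specific formula $\Des_H(\pi)$ comes from choosing $\omega$ by iteratively inserting the local orders $\pi_i c_i$ along a leaf-edge ordering of $E$. Your leaf-edge induction and your $\mathbb{Z}/p$-orbit remark are gesturing at the right ingredients, but neither the single-edge bijection nor the gluing step is actually carried out, and the decoupling shortcut you propose in their place does not work.
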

It is not true that $X_H$ is $F$-positive whenever $H$ is linear.  For example, $X_H$ is not $F$-positive when $H$ consists of the edges $\{1,2,3\},\{1,4\},\{2,4\},\{3,4,5\}$.  On the other hand, we guess (Conjecture \ref{conj:hypertreeconjecture}) that the primality condition is not necessary for $F$-positivity, although our proof relies on primality in a crucial way.  We also note that it is easy to extend Theorem \ref{bigthm} to disjoint unions of hypertrees, or \emph{hyperforests}, but for simplicity we only consider the connected case. 

In Section \ref{sec:singleedge}, we use a standardization procedure due to Gessel and Reutenauer \cite{gesselnecklaces} to show $F$-positivity of $X_H$ when $H$ consists of a single prime-sized edge.  In Section \ref{sec:Fpos}, we combine the result of Section \ref{sec:singleedge} with the theory of \emph{$P$-partitions} due to Stanley \cite{stanleyppartitions} and Gessel\cite{gesselquasi} to show $F$-positivity of $X_H$ for a hypertree $H$ with prime-size edges.  In Section \ref{sec:comb} we describe a combinatorial interpretation of the results in Section \ref{sec:Fpos} by giving the definition of $H$-descents and proving Theorem \ref{bigthm}.  We conclude in Section \ref{sec:end} by giving some conjectures and suggestions for further work.
\end{section}
\begin{section}{The single edge case}\label{sec:singleedge}
If $H= (V,E)$ only has a single edge $e$ consisting of all of $V$, so that $E = \{V\}$, then a proper coloring of $H$ is any coloring that is not constant.  Thus if $|V| = n$ then $X_H = p_1^n - p_n$, where $p_i$ is the $i$th \emph{power sum symmetric function} $$p_i = x_1^i +x_2^i + \cdots.$$  In this case it is not difficult to show that $X_H$ is $F$-positive using standard results on symmetric functions. In fact, $X_H$ is \emph{Schur-positive}, which implies $F$-positivity.  This in itself is not enough to show $F$-positivity for other chromatic symmetric functions of hypergraphs.  However, in this section we will prove $F$-positivity of $X_H = p_1^n - p_n$ when $n$ is prime by exhibiting an explicit partition of the set of all nonconstant colorings where each set in the partition has a generating function that is a fundamental quasisymmetric function $F_S$.  We will see in Section \ref{sec:Fpos} that if such a partition can be found for each edge $e$ in a hypertree then there is a similar partition of the set of proper colorings of that hypertree.

If $V$ is a finite set with $|V| = n$, let $\Sm_V$ be the set of permutations of $V$ realized as bijections $\pi: V\rightarrow [n]$; if $V = [n]$ we write $\Sm_n$ for $\Sm_{[n]}$.  Given a permutation $\pi \in \Sm_V$ and a subset $S \subseteq [n-1]$, let $A(\pi, S)$ be the set of colorings $\chi: V \rightarrow \PP$ satisfying the conditions $$\chi(\pi^{-1}(1)) \leq  \chi(\pi^{-1}(2)) \leq \ldots \leq  \chi(\pi^{-1}(n))$$ and $\chi(\pi^{-1}(i)) < \chi(\pi^{-1}(i+1))$ when $i \in S$.  Thus for any $\pi \in \Sm_V$ and $S\subseteq [n-1]$, $A(\pi, S)$ has the quasisymmetric generating function $$\sum_{\chi \in A(\pi, S)} x_{\chi(\pi^{-1}(1))} x_{\chi(\pi^{-1}(2))} \cdots x_{\chi(\pi^{-1}(n))} = F_S.$$

Recall that the \emph{descent set} $\Des(\pi)$ of a permutation $\pi \in \Sm_n$ is the set of $i < n$ so that $\pi(i) > \pi(i+1)$.  The goal of this section is to prove the following fact.
\begin{thm}\label{partition}
Let $V$ be a set with $|V| = n$ prime and let $c: V \rightarrow V$ be a cyclic permutation of $V$.  Then the set of nonconstant colorings $\chi: V \rightarrow \PP$ is the disjoint union $$\biguplus_{\pi\in \Sm_V} A(\pi, \Des(\pi c \pi^{-1})).$$
\end{thm}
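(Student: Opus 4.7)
The plan is to produce a standardization map $\chi \mapsto \pi_\chi$ from the set of nonconstant colorings to $\Sm_V$, closely modeled on Gessel and Reutenauer's standardization for cyclic words. Given a nonconstant $\chi : V \to \PP$, define $\pi_\chi \in \Sm_V$ by sorting the vertices $v \in V$ according to the lexicographic order on the infinite periodic words $w_v := (\chi(v),\chi(c(v)),\chi(c^2(v)),\ldots)$. The first thing to check is well-definedness: the $n$ sequences must be pairwise distinct, equivalently the cyclic word $(\chi(v),\chi(c(v)),\ldots,\chi(c^{n-1}(v)))$ must have primitive period $n$. Its period divides $n$, and since $n$ is prime this forces the period to be either $1$ or $n$; period $1$ corresponds exactly to a constant coloring, which we have excluded, so the sequences are distinct and $\pi_\chi$ is a well-defined element of $\Sm_V$.

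Next I would verify existence, i.e.\ that $\chi\in A(\pi_\chi,\Des(\pi_\chi c \pi_\chi^{-1}))$. The weak inequalities $\chi(\pi_\chi^{-1}(1))\le \cdots \le \chi(\pi_\chi^{-1}(n))$ are immediate from the sort. For the strict-inequality condition at descents, it suffices to prove the contrapositive: if $v=\pi_\chi^{-1}(i)$, $w=\pi_\chi^{-1}(i+1)$ satisfy $\chi(v)=\chi(w)$, then the first letters of $w_v$ and $w_w$ cancel in the lex comparison, so $w_{c(v)} \lesslex w_{c(w)}$, hence $\pi_\chi(c(v))<\pi_\chi(c(w))$, which says $i\notin\Des(\pi_\chi c\pi_\chi^{-1})$.

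The delicate part is uniqueness: if $\chi\in A(\sigma,\Des(\sigma c\sigma^{-1}))$ then $\sigma=\pi_\chi$. The defining property of $\sigma$ unpacks to the statement that whenever two $\sigma$-consecutive vertices share a $\chi$-value, applying $c$ preserves their $\sigma$-order. I would prove, by induction on the first index $k$ at which the sequences $w_v$ and $w_w$ differ, that $\sigma(v)<\sigma(w)$ if and only if $w_v \lesslex w_w$, i.e.\ $\sigma$ coincides with $\pi_\chi$. The base case $k=0$ is the weak-monotonicity of $\sigma$. For the inductive step we have $\chi(v)=\chi(w)$, and by the inductive hypothesis applied to the pair $c(v),c(w)$ we get $\sigma(c(v))<\sigma(c(w))$; if instead $\sigma(w)<\sigma(v)$, then all vertices $\sigma^{-1}(l)$ lying between $\sigma(w)$ and $\sigma(v)$ in $\sigma$-order are sandwiched in $\chi$-value and hence share the common $\chi$-value of $v$ and $w$, so the adjacent-equal-color rule applied consecutively along the chain yields $\sigma(c(w))<\sigma(c(v))$, a contradiction.

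I expect the main obstacle to be packaging this uniqueness induction cleanly: the chain-of-equal-colors argument is what makes primality indispensable, since primitivity of the cyclic word is exactly what guarantees the first-disagreement index $k$ is finite so the induction terminates. Finally I would note that constant colorings are genuinely excluded from the right-hand side: for $n\ge 2$ every $n$-cycle has nonempty descent set, so a constant $\chi$, which could only lie in some $A(\pi,\emptyset)$, cannot appear in the union.
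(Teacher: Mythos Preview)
Your proposal is correct and follows essentially the same Gessel--Reutenauer cyclic standardization route as the paper: define $\pi_\chi$ by lexicographically sorting the cyclic shifts, then verify both directions of ``$\chi\in A(\pi,\Des(\pi c\pi^{-1}))\iff \pi=\pi_\chi$''. The only organizational difference is that for uniqueness the paper reduces to $\sigma$-consecutive pairs $\pi^{-1}(i),\pi^{-1}(i+1)$ and inducts on the truncation length of the rotated words, whereas you induct on the first-disagreement index and handle nonadjacent pairs via your chain-of-equal-colors argument; these are equivalent packagings of the same idea.
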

Theorem \ref{partition} immediately gives the $F$-expansion of the symmetric function $p_1^n - p_n$ when n is prime; we have 
\begin{align}\label{singleedgeFexpansion}
p_1^n - p_n = \sum_{\pi \in \Sm_n} F_{\Des(\pi c \pi^{-1})}.
\end{align}
If we think of each $\pi \in \Sm_V$ as a \emph{labeling} of $V$ with the labels $1, \ldots, n$, identifying $v$ with $\pi(v)$, then $\pi c \pi^{-1}$ is the same as $c$ when viewed as a permutation of the labels.  Each cyclic permutation of the labels $[n]$ will appear $n$ times in this sum, so we have
\begin{align}\label{cycles}
\frac{p_1^n - p_n}{n} = \sum_{c} F_{\Des(c)}
\end{align}
where the sum is taken over all cyclic permutations $c: [n] \rightarrow [n]$.  The identity \eqref{cycles} was shown by Gessel and Reutenauer \cite{gesselnecklaces}, and in fact if $n$ is prime then $(p_1^n - p_n)/n$ is both the generating function for \emph{primitive necklaces} of length $n$ and the Frobenius chracteristic of the $\Sm_n$-representation given by the degree-$n$ multilinear part of the free Lie algebra on a set of size $n$.  

In the proof that follows it will be convenient to assume without loss of generality that $V = [n]$ and $c$ is the particular cyclic permutation $c(i) = i+1$ for $1 \leq i <n$ with $c(n) = 1$. We think of a coloring $\chi: V \rightarrow \PP$ as a \emph{word} $w = \chi$, where we write $w = w(1) \cdots w(n) \in \PP^n$.  To prove Theorem \ref{partition}, we will use a method of obtaining a permutation $\pi \in \Sm_n$ from a nonconstant word $w \in \PP^n$ when $n$ is  prime due to Gessel and Reutenauer \cite{gesselnecklaces}.  Let $w = w(1)w(2) \cdots w(n) \in \PP^n$ be a word that uses at least two distinct letters from $\PP$, so that we do not have $w(1) = w(2) = \cdots = w(n)$.  Let $r_i(w)$ be the rotation $$r_i(w) =  w(i) w(i+1) \cdots w(n) w(1) w(2) \cdots w(i-1)$$ of $w$.  The rotations $r_1(w), r_2(w), \ldots, r_n(w)$ need not be distinct in general.  For example, if $w = 1212$ then $r_1(w) = r_3(2) = 1212$.  If $n$ is prime, however, this cannot occur and the rotations of $w$ are all distinct as long as $w$ is a nonconstant word.

Assuming $n$ is prime, define the \emph{cyclic standardization} of $w$, which we'll denote $\cstd(w)$, to be the permutation obtained by ordering these rotated words lexicographically: we say $\pi = \cstd(w)$ if $\pi$ is the unique permutation in $\mathfrak{S}_n$ so that $r_x(w) \lesslex r_y(w)$ whenever $\pi(x) < \pi(y)$.  That is, we find $\pi = \cstd(w)$ by setting $\pi(i) = j$ when $r_i(w)$ is the $j$th smallest rotation of $w$.  For example, if $w = 2114132$ then $\pi = \cstd(w) = 4137265$; we have $\pi(2) = 1$ since $r_2(w) = 1141321$ is the least rotation of $w$ lexicographically, $\pi(5) = 2$ since $r_5(w) = 1322114$ is the next smallest, etc.

By the primality of $n$, the set of words $w \in \PP^n$ that are not constant is the disjoint union
$$\biguplus_{\pi \in \mathfrak{S}_n} \{ w \in \PP^n: \cstd(w) = \pi \}.$$ 
\noindent Thus the proof of Theorem \ref{partition} is immediate from the following lemma.

\begin{lemma}
Let $n$ be prime and let $w \in \PP^n$ be a nonconstant word. Then $\cstd(w) = \pi$ if and only if $w \in A(\pi, S)$ where $S = \Des(\pi c \pi^{-1})$.
\end{lemma}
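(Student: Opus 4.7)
The plan is to set $\sigma = \pi^{-1}$ and $\rho = \pi c \pi^{-1}$, exploit the identity $c\sigma = \sigma\rho$ (so $c^k \sigma(i) = \sigma(\rho^k(i))$), and use that by primality of $n$ and nonconstancy of $w$, all $n$ rotations of $w$ are pairwise distinct. I would then handle the two implications separately.

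For the forward direction, suppose $\cstd(w) = \pi$, so that $r_{\sigma(i)}(w) \lesslex r_{\sigma(i+1)}(w)$ for each $i$. Comparing first letters immediately gives the weakly increasing chain $w(\sigma(1)) \leq w(\sigma(2)) \leq \cdots \leq w(\sigma(n))$. To verify strictness at $i \in \Des(\rho)$ I would argue the contrapositive: if $w(\sigma(i)) = w(\sigma(i+1))$, then since the first letters of $r_{\sigma(i)}(w)$ and $r_{\sigma(i+1)}(w)$ agree, the lex comparison is decided by the remaining letters, and this reduces to the comparison of $r_{c\sigma(i)}(w) = r_{\sigma(\rho(i))}(w)$ against $r_{c\sigma(i+1)}(w) = r_{\sigma(\rho(i+1))}(w)$ (their trailing letters both equal $w(\sigma(i))$ and contribute nothing, distinctness ruling out exact equality). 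Hence $\pi(c\sigma(i)) < \pi(c\sigma(i+1))$, i.e.\ $\rho(i) < \rho(i+1)$, so $i \notin \Des(\rho)$.

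For the backward direction, assume $w \in A(\pi, \Des(\rho))$; I need $r_{\sigma(i)}(w) \lesslex r_{\sigma(j)}(w)$ for every $i < j$. If $w(\sigma(i)) < w(\sigma(j))$ this is immediate. Otherwise, the weakly increasing chain forces $w(\sigma(i)) = w(\sigma(i+1)) = \cdots = w(\sigma(j))$, and the hypothesis $A(\pi, \Des(\rho))$ forces none of $i, i+1, \ldots, j-1$ to lie in $\Des(\rho)$, so $\rho(i) < \rho(i+1) < \cdots < \rho(j)$. The lex comparison then reduces, via $c\sigma = \sigma\rho$, to the same problem for the new pair $(\rho(i), \rho(j))$, still in strict order. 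Iterating produces pairs $(\rho^k(i), \rho^k(j))$ all in strict order; if we never exit to a strict inequality of letters, the rotations $r_{\sigma(i)}(w)$ and $r_{\sigma(j)}(w)$ would agree letter by letter, contradicting distinctness.

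The hard part will be controlling this recursion in the backward direction cleanly: at each iteration one must verify that equality of the endpoint letters $w(\sigma(\rho^k(i))) = w(\sigma(\rho^k(j)))$ propagates across the whole interval $[\rho^k(i), \rho^k(j)]$, so that the descent-set hypothesis reapplies and the next pair remains in strict order. Once this propagation is in place, primality forces termination via distinctness of rotations, and both implications follow.
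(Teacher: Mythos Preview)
Your proposal is correct and is essentially the same argument as the paper's, just organized differently in the backward direction. The paper runs an induction on the truncation length $m$, proving simultaneously for \emph{all} pairs $x,y$ with $\pi(x)<\pi(y)$ that $r_x(w)|_m \leqlex r_y(w)|_m$; then for a consecutive pair $\pi(x)=i$, $\pi(y)=i+1$ with $w(x)=w(y)$, the non-descent condition gives $\pi(x+1)<\pi(y+1)$ and the inductive hypothesis (for this possibly non-consecutive new pair) finishes the step. Your iteration of $\rho$ on a single pair $(i,j)$ is the same letter-by-letter comparison unrolled. The ``hard part'' you flag is in fact immediate: since $w\circ\sigma$ is weakly increasing, equality of the endpoint letters $w(\sigma(\rho^k(i)))=w(\sigma(\rho^k(j)))$ automatically forces constancy on the whole interval $[\rho^k(i),\rho^k(j)]$, so no index there lies in $\Des(\rho)$ and $\rho$ is increasing on that interval, giving $\rho^{k+1}(i)<\rho^{k+1}(j)$. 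The paper's global induction on $m$ simply avoids this bookkeeping by carrying all pairs at once; your organization trades that for a slightly more hands-on iteration, but both reach the same conclusion by the same mechanism.
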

\begin{proof}

First, suppose that $\cstd(w) = \pi$.  For any $1 \leq i < n$, suppose $\pi(x) = i$ and $\pi(y) = i+1$; then $r_y(w) = w(y)w(y+1)\cdots$ is the next largest rotation of $w$ in lexicographic order after $r_x(w) = w(x)w(x+1)\cdots$, where we take $n+1 = 1$, $n+2 = 2$, etc.  In particular, we must have $w(x) \leq w(y)$, or $w(\pi^{-1}(i)) \leq w(\pi^{-1}(i+1) )$ as desired.  Now suppose that $i$ is a descent of $\pi c \pi^{-1}$; we must show that $w(x) <w(y)$.  We have $\pi c \pi^{-1}(i) > \pi c \pi^{-1}(i+1)$; that is, $\pi(x +1 ) > \pi(y+1)$.  Then we have 
\begin{align*}
r_{x+1}(w) = w(x+1) w(x+2) \cdots >_\lex r_{y+1}(w) = w(y+1) w(y+2) \cdots .
\end{align*}
But we also know that $w(x) w(x+1) \cdots \lesslex w(y)w(y+1) \cdots$, and the only way both of these lexicographic inequalities can occur is if $w(x) < w(y)$.  

Conversely, suppose that $w \in A(\pi, S)$.  To show that $\cstd(w) = \pi$, we will show that $\pi(x) < \pi(y)$ implies $r_x(w) \lesslex r_y(w)$.  Since $n$ is prime, all rotations of $w$ are distinct, and so it is enough to show that $\pi(x) < \pi(y)$ implies $r_x(w) \leqlex r_y(w)$; we may also assume that $\pi(x) = i$ and $\pi(y) = i+1$.  We will proceed inductively.  For any word $v = v(1)v(2) \cdots v(n) \in \PP^n$, let $v|_m$ be the truncation $v(1)\cdots v(m)$.  We will show that for each $m$, if $\pi(x) < \pi(y)$ we must have $r_x(w)|_m \leqlex r_y(w)|_m$.  If $m=1$, the truncations $r_x(w)|_m,r_y(w)|_m$ are the single-character words $w(x),w(y)$; and $w(x) = w(\pi^{-1}(i)) \leq w(\pi^{-1}(i+1)) = w(y)$ since $w \in A(\pi, S)$.

Now suppose that the statement holds for $m$.  We will show that $r_{x}(w)|_{m+1} \leqlex r_{y}(w)|_{m+1}$.    By the argument for the base case, we know $w(x) \leq w(y)$; if $w(x) < w(y)$ we are done, so assume $w(x) = w(y)$.  Since $w \in A(\pi, S)$,  $i$ must be an ascent of $\pi c \pi^{-1}$; that is, $\pi c \pi^{-1}(i) < \pi c \pi^{-1}(i+1)$, or $\pi( x+1) < \pi(y+1)$.  By our inductive hypothesis, we must have $r_{x+1}(w)|_m \leqlex r_{y+1}(w)|_m$.  Then $r_x(w)|_{m+1} = w(x)r_{x+1}(w)|_m \leqlex w(x)r_{y+1}(w)|_m = r_y(w)|_{m+1}$.
\end{proof}
\end{section}
\begin{section}{Proof of F-positivity}\label{sec:Fpos}
Now we are in a position to show the $F$-positivity of $X_H$ when $H = (V,E)$ is a hypertree with prime-sized edges.  The primality gives us the decomposition described in Theorem \ref{partition} for each edge $e \in E$, and the hypertree structure will enable us to glue these decompositions together to get a similar decomposition of the set of all proper colorings of $H$.  The glue, in this case, is the theory of \emph{$P$-partitions}.

Given a poset $P$ on a vertex set $V$, a mapping $f: V \rightarrow \PP = \{1,2,\ldots\}$ is a $P$-partition if $x \leq_P y$ implies $f(x) \leq f(y)$.  If $P$ is the poset $[n]$ with the usual order, then a $P$-partition is a sequence of increasing integers $f(1) \leq f(2) \leq f(3) \leq \ldots \leq f(n)$.  This is equivalent to the usual definition of a \emph{partition} of the integer $f(1) + f(2) + \cdots +f(n)$.  Traditionally a partition of an integer is written in descending order, so what we call $P$-partitions were called \emph{reverse} $P$-partitions by Stanley \cite{stanleyppartitions, EC2}. 

Suppose that $\omega \in \Sm_V$ is a bijection $V \rightarrow [n]$.  In what follows it will be convenient to identify $\omega$ with the total order $<_\omega$ put on the vertices of $P$ where $x<_\omega y$ means that $\omega(x) < \omega(y)$.  A \emph{ $(P, \omega)$-partition} is a $P$-partition $f$ that has strict inequalities where the orders $P$ and $\omega$ disagree.  That is, if $x <_P y$ and $x <_\omega y$ then $f(x) \leq f(y)$, but if $x <_P y$ and $x >_\omega y$ then $f(x)<f(y)$.  

A \emph{linear extension} of $P$ is an order-preserving bijection $f: V \rightarrow [n]$.  The main result on $(P,\omega)$-partitions we need is the following fact, sometimes called the Fundamental Theorem of $(P,\omega)$-Partitions. See \cite[Lemma 3.15.3]{EC1} for a proof when $(P,\omega)$-partitions are taken to be order-reversing; it is given without proof in \cite[7.19.4]{EC2} for $(P,\omega)$-partitions taken to be order-preserving as we do.
\begin{thm}\label{fund}
Let $P  = (V, \leq_P)$ be a finite poset with $|V| = n$ and let $\omega: V \rightarrow [n]$ be any bijection.  Then the set of $(P, \omega)$-partitions is exactly the disjoint union $$\biguplus_\pi A(\pi, \Des(\omega \pi^{-1}))$$
where $A(\pi, S)$ is as defined in Section \ref{sec:singleedge}, and the union is taken over all linear extensions $\pi: V \rightarrow [n]$ of $P$.
\end{thm}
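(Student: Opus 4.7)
The approach I would take is to construct an explicit bijection between $(P,\omega)$-partitions and the disjoint union by sorting. Given a $(P,\omega)$-partition $f: V \to \PP$, I define $\pi_f \in \Sm_V$ to be the unique bijection such that $\pi_f(v) < \pi_f(w)$ iff either $f(v) < f(w)$, or $f(v) = f(w)$ and $\omega(v) < \omega(w)$. In words, $\pi_f$ sorts the vertices by $f$-value, breaking ties according to $\omega$. The goal is to show that (i) $\pi_f$ is always a linear extension of $P$, (ii) $f$ lies in $A(\pi_f, \Des(\omega \pi_f^{-1}))$, and conversely, (iii) every $f$ appearing in $A(\pi, \Des(\omega \pi^{-1}))$ for some linear extension $\pi$ is a $(P,\omega)$-partition with $\pi = \pi_f$. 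Together these statements give both the containment of the set of $(P,\omega)$-partitions in the union and the disjointness of the union.

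Step (i) is immediate from the definitions: if $v <_P w$ then the $(P,\omega)$-partition condition gives $f(v) \leq f(w)$, with strict inequality when $\omega(v) > \omega(w)$, so in either case the tiebreaking rule forces $\pi_f(v) < \pi_f(w)$. Step (ii) is also direct: by construction $f(\pi_f^{-1}(1)) \leq \cdots \leq f(\pi_f^{-1}(n))$, and if $i \in \Des(\omega \pi_f^{-1})$, letting $x = \pi_f^{-1}(i)$ and $y = \pi_f^{-1}(i+1)$, we have $\omega(x) > \omega(y)$, so $f(x) = f(y)$ would contradict $\pi_f(x) < \pi_f(y)$ by the tiebreaking rule.

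The main obstacle is step (iii), in particular showing that strict inequality $f(v) < f(w)$ holds whenever $v <_P w$ and $\omega(v) > \omega(w)$, for $f \in A(\pi, \Des(\omega \pi^{-1}))$. Since $\pi$ is a linear extension, $\pi(v) < \pi(w)$, giving the weak inequality $f(v) \leq f(w)$ from the definition of $A(\pi, -)$. For strictness I would observe that $\omega \pi^{-1}$ starts at position $\pi(v)$ with value $\omega(v)$ and ends at position $\pi(w)$ with the smaller value $\omega(w)$, so somewhere along the way it must strictly decrease, producing at least one descent $k \in \Des(\omega \pi^{-1})$ with $\pi(v) \leq k < \pi(w)$. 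At that descent the strict inequality in the definition of $A(\pi, \Des(\omega \pi^{-1}))$ yields $f(\pi^{-1}(k)) < f(\pi^{-1}(k+1))$, which combined with the other weak inequalities in the chain gives $f(v) < f(w)$. For the recovery $\pi = \pi_f$ (hence disjointness of the union), I would note that whenever $f(\pi^{-1}(i)) = f(\pi^{-1}(i+1))$, the index $i$ cannot be a descent of $\omega \pi^{-1}$, so $\omega(\pi^{-1}(i)) < \omega(\pi^{-1}(i+1))$; thus $\pi$ agrees with the sort-by-$f$-tiebreak-by-$\omega$ rule that defines $\pi_f$, so $\pi = \pi_f$.
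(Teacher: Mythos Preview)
Your argument is correct and is essentially the standard proof of the Fundamental Theorem of $(P,\omega)$-partitions. Note, however, that the paper does not supply its own proof of this statement: it is quoted as a known result, with a reference to Stanley's \emph{Enumerative Combinatorics} (Lemma~3.15.3 in Volume~1 for the order-reversing convention, and 7.19.4 in Volume~2 for the order-preserving convention used here). The proof you outline---sort $V$ by $f$-value, break ties by $\omega$, and verify that this recovers the unique linear extension $\pi$ with $f \in A(\pi,\Des(\omega\pi^{-1}))$---is precisely the argument given in those references, so there is nothing to contrast.
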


The key fact we will use about hypertrees is that posets on different edges are compatible with each other.

\begin{lemma}\label{lemma:compatible}
Let $H = (V,E)$ be a hypertree with $E = \{e_1, \ldots, e_k\}$.  Suppose that each edge $e_i \in E$ has an associated poset $P_i$ with vertex set $e_i$ and relation $<_i$.  Define the relation $<$ on $V$ by taking the transitive closure of all the relations $<_e$, so that $x < y$ in $V$ if there is a chain $x = v_1 <_{i_1} v_2 <_{i_2} \cdots <_{i_l} v_{l} = y$.  Then $P = (V, <)$ is a poset.
\end{lemma}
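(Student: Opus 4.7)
The relation $<$ is transitive by its construction as a transitive closure, so it suffices to prove antisymmetry. Since each $<_i$ is irreflexive on its edge $e_i$, this reduces to showing that no cyclic chain
\[
v_1 <_{i_1} v_2 <_{i_2} \cdots <_{i_\ell} v_{\ell+1} = v_1
\]
with $\ell \geq 2$ exists. My plan is to argue by contradiction: assume such a cycle occurs, pick one of minimal length $\ell$, and rule it out using the tree structure of $H$.

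As a first reduction, I claim that no two cyclically consecutive edges can coincide in the chosen cycle. If $e_{i_j} = e_{i_{j+1}}$ for some $j$ (indices mod $\ell$), then $v_j, v_{j+1}, v_{j+2}$ all lie in the single edge $e_{i_j}$, and transitivity of the poset $P_{i_j}$ yields $v_j <_{i_j} v_{j+2}$, producing a strictly shorter cyclic chain. (The degenerate case where this merge would leave only one step gives $v_j <_{i_j} v_j$, directly contradicting irreflexivity.) By minimality, the chosen cycle must satisfy $e_{i_j} \neq e_{i_{j+1}}$ for every cyclic $j$.

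Next, I would invoke the equivalence between the hypertree property and the vertex-edge incidence bipartite graph $B$ being a tree, where $B$ has node set $V \sqcup E$ with an edge $\{v,e\}$ whenever $v \in e$; this equivalence is routine from the paper's definition of path, since cycles in $H$ correspond bijectively to cycles in $B$. The reduced cyclic chain then gives the closed walk
\[
v_1,\, e_{i_1},\, v_2,\, e_{i_2},\, \ldots,\, v_\ell,\, e_{i_\ell},\, v_1
\]
of length $2\ell \geq 4$ in the tree $B$. Any closed walk of positive length in a tree must backtrack, that is, contain three consecutive terms of the form $a,b,a$ (read cyclically). A backtrack at an edge-node $e_{i_j}$ would force $v_j = v_{j+1}$, impossible because $v_j <_{i_j} v_{j+1}$; a backtrack at a vertex-node, including the cyclic wrap-around at $v_1$, would force $e_{i_j} = e_{i_{j+1}}$ for some cyclic $j$, contradicting the first reduction. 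This contradiction establishes antisymmetry.

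The main subtle point is confirming the equivalence between hypertrees in the paper's sense and the bipartite-tree condition on $B$, but this is a routine unwinding of the definitions of path and cycle. No other serious obstacle is expected; the combinatorial cases in the backtrack step are exhaustive because $B$ is bipartite with vertex-nodes and edge-nodes strictly alternating along the walk.
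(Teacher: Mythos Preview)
Your proof is correct. The paper's own argument is much terser: it forms the directed graph $G$ on $V$ with an arc $x \to y$ whenever $x <_i y$ for some $i$, asserts that $G$ is ``easily seen to be acyclic since $H$ is a hypertree,'' and concludes. Your argument is essentially a careful justification of that one-line assertion.

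The one genuine difference in route is your passage through the bipartite incidence graph $B$. Rather than trying to extract a hypergraph cycle directly from a minimal cyclic chain---which would require all vertices \emph{and} all hyperedges in the chain to be distinct, not merely the cyclically consecutive ones---you note that the chain yields a closed walk in the tree $B$ and invoke the elementary fact that closed walks in trees must backtrack. This neatly sidesteps any separate argument about non-consecutive repetitions and is a clean way to fill in what the paper leaves implicit. One minor quibble: what you are really establishing is irreflexivity of $<$ (ruling out $v < v$), from which asymmetry follows by transitivity; calling the target ``antisymmetry'' is slightly off for a strict order, but the content is fine.
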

\begin{proof}
Form a directed graph $G$ on $V$ by setting $x \rightarrow y$ when there is an edge $e \in E$ with $x,y \in e$ and $x <_e y$.  Then $G$ is easily seen to be acyclic since $H$ is a hypertree, and any directed acyclic graph determines a poset after extending transitively.
\end{proof}

\begin{thm}\label{thm:Fpos}
Let $H = (V,E)$ be a hypertree so that $|e|$ is prime for each $e \in E$.  Then $X_H$ is $F$-positive.
\end{thm}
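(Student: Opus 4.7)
The plan is to promote the single-edge decomposition of Theorem \ref{partition} to a global decomposition of the proper colorings of $H$, using $(P,\omega)$-partition theory (Theorem \ref{fund}) as the glue and Lemma \ref{lemma:compatible} to guarantee that the gluing is globally consistent.

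First I would fix, once and for all, a cyclic permutation $c_e: e \to e$ for each edge $e \in E$. Since a coloring of $H$ is proper iff its restriction to each edge is nonconstant, applying Theorem \ref{partition} to each edge independently yields the refined partition
$$\{\chi: V \to \PP \text{ proper}\} \;=\; \biguplus_{(\pi_e)_{e \in E}} \bigcap_{e \in E} A_e(\pi_e, S_e),$$
where $\pi_e$ ranges over $\Sm_e$, $S_e = \Des(\pi_e c_e \pi_e^{-1})$, and $A_e(\pi_e, S_e)$ denotes the set of colorings $\chi$ whose restriction to $e$ lies in $A(\pi_e, S_e)$. It therefore suffices to show that each cell on the right is $F$-positive. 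Fix a tuple $(\pi_e)_{e \in E}$. Each $\pi_e$ is a total order on the vertex set of $e$, and Lemma \ref{lemma:compatible} glues these into a global poset $P = P_{(\pi_e)_e}$ on $V$; the cell $\bigcap_e A_e(\pi_e, S_e)$ then consists precisely of the $P$-partitions $\chi$ that are strict at the covering relation $\pi_e^{-1}(i) \lessdot_P \pi_e^{-1}(i+1)$ whenever $i \in S_e$.

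To bring this cell within reach of Theorem \ref{fund}, I need a bijection $\omega: V \to [n]$ whose restriction to each edge $e$ has descent set $S_e$ when read in the order $\pi_e$. The key idea is a second application of Lemma \ref{lemma:compatible}: for each edge $e$, put on $e$ the total order $\tau_e$ defined by $u \prec_{\tau_e} v$ iff $\pi_e(c_e(u)) < \pi_e(c_e(v))$, so that if $\omega|_e$ is $\tau_e$-increasing then the sequence $\omega(\pi_e^{-1}(1)), \ldots, \omega(\pi_e^{-1}(|e|))$ has the same ordinal pattern as $\pi_e c_e \pi_e^{-1}$ and hence descent set exactly $S_e$. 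Gluing the $\tau_e$ via Lemma \ref{lemma:compatible} produces a poset $Q$ on $V$, and any linear extension $\omega: V \to [n]$ of $Q$ has the desired descent set on every edge. A short transitivity argument then identifies $\bigcap_e A_e(\pi_e, S_e)$ with the set of $(P,\omega)$-partitions, so that Theorem \ref{fund} expands it as the nonnegative sum $\sum_{\pi} F_{\Des(\omega \pi^{-1})}$ over linear extensions $\pi: V \to [n]$ of $P$. Summing across all tuples $(\pi_e)_e$ yields the $F$-positive expansion of $X_H$.

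The main obstacle I expect is the construction of $\omega$: a priori the strict-descent markings on different edges could be globally incompatible, since an inconsistent choice would create a directed cycle among the desired $\omega$-inequalities and no such $\omega$ would exist. The hypertree hypothesis resolves this precisely because Lemma \ref{lemma:compatible} is indifferent to which total orders are placed on the edges, so applying it to the $\tau_e$ is as legitimate as applying it to the $\pi_e$. The remaining check is the identification $\bigcap_e A_e(\pi_e, S_e) = \{(P,\omega)\text{-partitions}\}$: the inclusion from right to left is immediate from the defining inequalities, while the inclusion from left to right uses the observation that any $P$-chain from $u$ to $v$ with $\omega(u) > \omega(v)$ must, when refined into within-edge covers, contain at least one strict cover --- otherwise $\omega$ would be strictly increasing along the chain, contradicting $\omega(u) > \omega(v)$.
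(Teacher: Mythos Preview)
Your proposal is correct and follows essentially the same route as the paper: decompose proper colorings by tuples $(\pi_e)_e$ via Theorem~\ref{partition}, use Lemma~\ref{lemma:compatible} twice to glue the orders $\pi_e$ into a poset $P$ and the shifted orders $\tau_e$ (the paper's $\omega_{\pi_i}=\pi_i c_i$) into a poset $Q$, pick $\omega$ a linear extension of $Q$, and identify each cell with the set of $(P,\omega)$-partitions before applying Theorem~\ref{fund}. Your chain argument for the identification is slightly more explicit than the paper's ``repeatedly applying the local conditions,'' but the content is the same.
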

\begin{proof}
Say $E = \{e_1, \ldots, e_k\}$.  For each edge $e_i \in E$ fix a particular bijection $c_i: e_i \rightarrow e_i$ that is cyclic.  Since each edge $e_i \in |E|$ has $|e_i|$ prime, by Theorem \ref{partition} the set of nonconstant colorings $\chi: e_i \rightarrow \PP$ is the disjoint union
\begin{align}\label{smallunion}
\biguplus_{\pi \in \Sm_{e_i}} A(\pi, \Des(\pi c_i \pi^{-1})).
\end{align}
Let $PC(H)$ be the set of proper colorings $\chi$ of $H$.  A coloring $\chi$ of $H$ is proper if and only if each restriction $\chi|_{e_i}: e_i \rightarrow \PP$ is not constant, so we have
\begin{align}\label{bigunion}
PC(H) = \biguplus_{\pi_1, \ldots, \pi_k} A(\pi_1, \pi_2, \ldots, \pi_k)
\end{align}
where the union is taken over all $k$-tuples $(\pi_1, \ldots, \pi_k)$ with $\pi_i \in \Sm_{e_i}$ and 
$$A(\pi_1, \pi_2, \ldots, \pi_k) = \{ \chi:\PP\rightarrow V: \chi|_{e_i} \in A(\pi_{i}, \Des(\pi_i c_{i} \pi_i^{-1})) \text{ for all } i\}.$$
The union \eqref{bigunion} is disjoint since each union \eqref{smallunion} is: a coloring $\chi \in A(\pi_1, \pi_2, \ldots, \pi_k)$ uniquely determines each $\pi_i$ since the restriction $\chi|_{e_i}$ uniquely determines $\pi_{i}$.

Given an edge $e_i$ and a bijection $\pi_i: e_i \rightarrow [m] \in \Sm_{e_i}$ where $m = |e_i|$, define a poset $P_{\pi_i}$ (actually a total order) on the vertex set $e_i$ by $x <_{P_{\pi_i}} y$ when $\pi_i(x) <\pi_i(y)$, and let $\omega_{\pi_i}: e_i \rightarrow [m]$ be the labeling $\pi_i c_i$.  Then $\pi_i$ is the unique linear extension of $P_{\pi_i}$, so by Theorem \ref{fund} the set of $(P_{\pi_i},\omega_{\pi_i})$-partitions is exactly the set $A(\pi_i, \Des( \omega_{\pi_i} \pi_i^{-1} )) = A(\pi_i, \Des( \pi_i c_i \pi_i^{-1} ))$.  Thus $A(\pi_1, \pi_2, \ldots, \pi_k)$ is the set of colorings $\chi$ so that $\chi|_{e_i}$ is a $(P_{\pi_i},\omega_{\pi_i})$-partition for each $i$.

Fix a choice of $\pi_i \in \Sm_{e_i}$ for each edge $e_i$.  By Lemma \ref{lemma:compatible} there are well-defined posets $P_{\pi_1, \ldots, \pi_k}, Q_{\pi_1, \ldots, \pi_k}$ given by taking the transitive closure of the relations of the posets $P_{\pi_i}, \omega_{\pi_i}$ respectively.  Let $\omega_{\pi_1, \ldots, \pi_k}$ be any linear extension of $Q_{\pi_1, \ldots, \pi_k}$.  We claim that $A(\pi_1, \pi_2, \ldots, \pi_k)$ is exactly the set of $(P_{\pi_1, \ldots, \pi_k}, \omega_{\pi_1, \ldots, \pi_k})$-partitions.  It is clear from the definitions that if $\chi$ is a $(P_{\pi_1, \ldots, \pi_k}, \omega_{\pi_1, \ldots, \pi_k})$-partition then $\chi|_{e_i}$ is a $(P_{\pi_i}, \omega_{\pi_i})$-partition for each $i$, so $\chi \in  A(\pi_1, \pi_2, \ldots, \pi_k)$.  Conversely, if $\chi \in  A(\pi_1, \pi_2, \ldots, \pi_k)$ it is not hard to see that $\chi$ is a $(P_{\pi_1, \ldots, \pi_k}, \omega_{\pi_1, \ldots, \pi_k})$-partition by repeatedly applying the ``local conditions'' that each $\chi|_{e_i}$ is a $(P_{\pi_i},\omega_{\pi_i})$-partition.  

Combining Theorem \ref{fund} with \eqref{bigunion} then gives 
\begin{align}\label{eq:Fpos}
PC(H) = \biguplus_{\pi_1, \ldots, \pi_k} \biguplus_\sigma A(\sigma, \Des \omega_{\pi_1, \ldots, \pi_k} \sigma^{-1} )
\end{align}
where the union is taken over all tuples $\pi_1, \ldots, \pi_k$ with $\pi_i \in e_i$ and linear extensions $\sigma: V \rightarrow [n]$ of $P_{\pi_1, \ldots, \pi_k}$.  Since the quasisymmetric generating function of $A(\sigma, S)$ is $F_S$ we are done.
\end{proof}

By rewriting \eqref{eq:Fpos} in a simpler form we can give an expression for $X_H$ as a sum of fundamental quasisymmetric functions indexed by permutations $\pi: V \rightarrow [n]$.

\begin{corollary}[of the proof of Theorem \ref{thm:Fpos}]  \label{cor:nicer} 
Let $H = (V,E)$ be a hypertree.  Given a permutation $\pi: V \rightarrow [n]$, define posets $P(\pi), Q(\pi)$ on $V$ so that when $x,y$ both belong to the same edge $e$ then $x<_{P(\pi)} y$ iff $\pi(x) <\pi(y)$ and $x <_{Q(\pi)} y$ iff $\pi c (x)  < \pi c (y)$. Fix a linear extension $\omega_\pi$ of $Q(\pi)$ for each $\pi \in \Sm_V$.  Then 
\begin{align}\label{eq:nicer}
X_H = \sum_{\pi \in \Sm_V} F_{\Des( \omega_\pi \pi^{-1})}.
\end{align}
\end{corollary}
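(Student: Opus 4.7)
The plan is to reindex the double disjoint union appearing in equation \eqref{eq:Fpos} by a single permutation $\pi \in \Sm_V$. The main step is to set up a bijection between pairs consisting of a tuple $(\pi_1, \ldots, \pi_k) \in \Sm_{e_1} \times \cdots \times \Sm_{e_k}$ together with a linear extension $\sigma$ of $P_{\pi_1, \ldots, \pi_k}$, and permutations $\pi \in \Sm_V$. In one direction, send such a pair to $\pi := \sigma$. In the other, given $\pi \in \Sm_V$, recover each $\pi_i$ as the \emph{standardization} of the restriction $\pi|_{e_i}$, i.e., the unique bijection $e_i \to [|e_i|]$ having the same relative order as $\pi|_{e_i}$. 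I would check that these operations are mutually inverse and, crucially, that $\pi$ is automatically a linear extension of the $P_{\pi_1, \ldots, \pi_k}$ produced by the standardizations, since $P_{\pi_i}$ is by construction the total order on $e_i$ induced by $\pi$.

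The next step is to match the posets and labelings from the proof of Theorem \ref{thm:Fpos} with those in the corollary statement. Since standardization preserves relative order, on each edge $e_i$ the total order $P_{\pi_i}$ coincides with $P(\pi)|_{e_i}$, and $\omega_{\pi_i} = \pi_i c_i$ induces the same total order on $e_i$ as $\pi c_i$, which is exactly $Q(\pi)|_{e_i}$. Taking transitive closures over the edges yields $P_{\pi_1, \ldots, \pi_k} = P(\pi)$ and $Q_{\pi_1, \ldots, \pi_k} = Q(\pi)$, so the linear extension $\omega_{\pi_1, \ldots, \pi_k}$ of $Q_{\pi_1, \ldots, \pi_k}$ chosen in the proof of Theorem \ref{thm:Fpos} serves as a valid choice of $\omega_\pi$ in the corollary. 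Substituting this identification into \eqref{eq:Fpos} collapses the double sum into $\sum_{\pi \in \Sm_V} F_{\Des(\omega_\pi \pi^{-1})}$, which is \eqref{eq:nicer}.

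There is no substantive obstacle here; the statement is genuinely a corollary of the proof of Theorem \ref{thm:Fpos} rather than of its statement, and the work consists mainly of matching notation and checking the standardization bijection. The one point requiring care is that the choice of $\omega_\pi$ inherited from the proof depends on $\pi$ only through the induced tuple $(\pi_1, \ldots, \pi_k)$; this is the natural choice, and under it the reindexing argument above produces the formula directly.
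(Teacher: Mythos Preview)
Your proposal is correct and follows essentially the same route as the paper. The paper's proof is a one-sentence reindexing: for each $\pi \in \Sm_V$ it takes $\pi_i = g \circ \pi|_{e_i}$ with $g$ the unique increasing map $\pi(e_i) \to [|e_i|]$, which is precisely your standardization, and then passes from \eqref{eq:Fpos} to \eqref{eq:nicer}. Your writeup simply makes explicit the bijection between pairs $((\pi_1,\ldots,\pi_k),\sigma)$ and permutations $\pi$, and the identifications $P_{\pi_1,\ldots,\pi_k}=P(\pi)$, $Q_{\pi_1,\ldots,\pi_k}=Q(\pi)$, which the paper leaves implicit; your observation that the inherited $\omega_\pi$ depends on $\pi$ only through the induced tuple is accurate and matches how the corollary is actually applied later (in the proof of Theorem~\ref{bigthm}, the chosen $\omega_\pi$ is built from the edge-restrictions $\omega_i$ and so factors through the tuple).
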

\begin{proof}
Say $E = \{e_1, \ldots, e_k\}$.  Then for each $\pi: V \rightarrow [n] \in \Sm_V$ there is a unique choice of $\pi_1, \ldots, \pi_k$ so that $\pi$ is a linear extension of $P_{\pi_1, \ldots, \pi_k}$: let $\pi_i = g \circ \pi|_{e_i}$ where $g$ is the unique increasing function from $\pi(e_i)$ to $[|e_i|]$.  Applying this fact to \eqref{eq:Fpos} and taking the quasisymmetric generating function gives \eqref{eq:nicer}.
\end{proof}

In Corollary \ref{cor:nicer} the choice of the linear extension $\omega_\pi$ is arbitrary.  Every poset has a linear extension, and this fact is enough to prove $F$-positivity.  From a combinatorial standpoint, however, it would be desirable to find a specific choice of $\omega_\pi$ that is natural in some sense. In the next section we do so, giving a simple combinatorial interpretation to the $F$-coefficients of $X_H$.

\end{section}
\begin{section}{Combinatorial interpretation}\label{sec:comb}
Before we can define the \emph{$H$-descents} alluded to in the introduction, we need to show the existence of a particularly nice ordering of the edges of a hypertree.  The following lemma says that any hypertree may be constructed by adding one edge at a time, each new edge intersecting the others in a single vertex.
\begin{lemma}\label{lemma:niceorder}
Let $H = (V,E)$ be a hypertree.  Then there is an ordering of its edges so that $E = \{e_1,e_2, \ldots, e_k\}$ with
\begin{align}\label{eq:niceorder}
|(e_1 \cup e_2 \cup \cdots \cup e_i) \cap e_{i+1}| = 1 \text{ for } i=1, \ldots, k-1.
\end{align}
\end{lemma}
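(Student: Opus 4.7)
The plan is to prove the lemma by induction on $k = |E|$, the base case $k=1$ being trivial. For the inductive step with $k \geq 2$, I would locate a ``pendant'' edge $e^* \in E$ intersecting the union of the remaining edges in exactly one vertex; then one sets $e_k := e^*$ and applies the inductive hypothesis to the hypertree obtained by removing $e^*$ together with its vertices that lie in no other edge.

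The existence of such a pendant edge is extracted from the bipartite incidence graph $B_H$ on vertex set $V \sqcup E$, where $v \in V$ is joined to $e \in E$ precisely when $v \in e$. A short check shows that $B_H$ is a tree: connectedness is inherited from $H$, while any cycle in $B_H$ would, by bipartiteness, have the form $v_1, e_1, v_2, e_2, \ldots, v_m, e_m, v_1$ with all $v_i$ and $e_i$ distinct, yielding a cycle in $H$ and contradicting the hypertree assumption. Since every edge-vertex has degree $|e| \geq 2$ in $B_H$, all leaves of $B_H$ lie in $V$.

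I would then root $B_H$ at an arbitrary leaf $v_0 \in V$ and let $v$ be a deepest leaf of the resulting rooted tree. Its parent $e^* \in E$ has every child a leaf of $B_H$, since a non-leaf child would yield a strictly deeper leaf than $v$. When $k \geq 2$, the parent $v^* \in V$ of $e^*$ must have $B_H$-degree $\geq 2$ --- otherwise every neighbor of $e^*$ would be a leaf of $B_H$, forcing $E = \{e^*\}$ by connectedness of $H$. Hence $v^*$ is the unique vertex of $e^*$ lying in any other edge, giving $e^* \cap \bigl(\bigcup_{e \ne e^*} e\bigr) = \{v^*\}$, which is exactly the pendant property needed.

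Finally, setting $V' := V \setminus (e^* \setminus \{v^*\})$ and $H' := (V', E \setminus \{e^*\})$, one verifies that $H'$ is a hypertree with $k - 1$ edges: any $H$-path between vertices of $V'$ cannot traverse $e^*$, since doing so would force one endpoint of the path to be a degree-$1$ vertex outside $V'$. Applying the inductive hypothesis to $H'$ and appending $e_k := e^*$ yields the desired ordering. I expect the only subtle step to be the verification that $B_H$ is a tree --- in particular the bookkeeping that converts cycles of $B_H$ into cycles of $H$ --- together with the argument that $v^*$ has $B_H$-degree $\geq 2$ when $k \geq 2$; once those are in place, the remainder of the induction is routine.
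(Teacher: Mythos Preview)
Your argument is correct and follows the same inductive scheme as the paper: locate a ``pendant'' edge $e^*$ that meets the union of the remaining edges in exactly one vertex, delete it, and recurse on the smaller hypertree.  The difference is in how the pendant edge is found.  The paper works directly in $H$: it takes a path $v_1, f_1, v_2, \ldots, f_l, v_{l+1}$ of maximal length and shows that the terminal edge $f_l$ can meet any other edge only in $v_l$, since otherwise one either extends the path (if the other edge is new) or closes a cycle (if it is some earlier $f_j$).  You instead pass to the bipartite incidence graph $B_H$, observe that it is a tree, and pick $e^*$ as the parent of a deepest leaf after rooting at a leaf.  Both extremality arguments are short; yours has the virtue of making explicit the well-known equivalence between hypertrees and trees via $B_H$, while the paper's version avoids introducing an auxiliary object.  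Your connectedness check for $H'$ (that no $H$-path between vertices of $V'$ can traverse $e^*$) is slightly terser than needed---the point is that one of the two vertices of $e^*$ used by the path lies in $e^*\setminus\{v^*\}$ and hence in no other edge, forcing it to be an endpoint---but the reasoning is sound.
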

\begin{proof}
It is enough to find an $e \in E$ and $v \in e$ so that $e' \cap e \subseteq \{v\}$ for any $e' \in E$ with $e' \neq e$.  Once such an $e$ is found, let $H' = (V', E')$ with $V' = V \backslash e \cup \{v\}$, $E' = E \backslash \{e\}$.  It is easy to check that $H'$ is a hypertree with $k-1$ edges, so we may assume inductively that $H'$ has an ordering $e_1, e_2, \ldots, e_{k-1}$ of $E'$ satisfying \eqref{eq:niceorder}. Setting $e_k = e$, we see that $e_1, \ldots, e_k$ is the desired order of $E$.  

To find such an $e$ and $v \in e$, let $v_1,f_1, v_2, f_2, \ldots, v_l,f_l,v_{l+1}$ be a path of maximal length $l$ in $H$.  We claim that $e = f_l$, $v = v_l$ satisfies the desired property.  Suppose that there is $e' \in E$ with $e' \cap f_l \not\subseteq \{v_l\}$; then there is $u \in e' \cap f_l$ with $u \neq v$.  We must have $e' = f_j$ for some $j < l$, or else we would have a longer path $v_1, f_2, \ldots, v_l, f_l, u, e', u'$ where $u \in e'$ with $u \neq u'$.  Say that $j$ is as large as possible.  Then we have a cycle $u, f_j, v_{j+1},f_{j+1}, \ldots, f_l,  u$ which violates the definition of a hypertree.  
\end{proof}
In fact, the converse of Lemma \ref{lemma:niceorder} is easily seen to hold as well, so that the existence of edge-orderings satisfying $\eqref{eq:niceorder}$ characterizes hypertrees.  From now on we will generally assume that $H$ is equipped with some choice of such an edge-ordering and our subsequent definitions are all based on this edge-ordering.

Suppose that $H = (V,E)$ is a hypertree with $E = \{e_1, \ldots, e_k\}$ satisfying \eqref{eq:niceorder}, and fix a choice of cyclic permutation $c_i: e_i \rightarrow e_i$ for each edge $e \in E$.  Suppose also that $V = [n]$, so that $V$ is equipped with the order $<$.  Since $H$ is a hypertree, for each $i$ there is a \emph{unique} path $i = v_{1}, e_{j_1}, v_2, e_{j_2}, \ldots, e_{j_l}, v_{l+1} = i+1$ from $i$ to $i+1$, where the vertices and edges in the path are all distinct.  Let $j_r = \min (j_1, j_2, \ldots, j_l)$.  Then we say that $i$ is an $H$-descent if $c_{j_r}(v_r) > c_{j_r}(v_{r+1})$. 

\begin{figure}[htbp]
	\vspace{-30pt}
  \begin{centering}
    \includegraphics[width=.6\textwidth ]{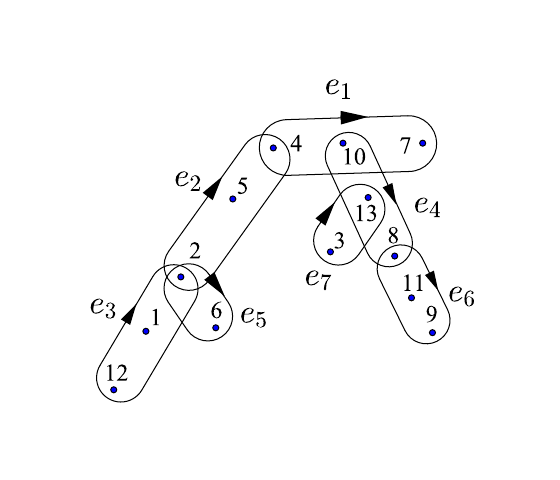}
	\vspace{-30pt}
    \caption{A hypertree with labeled vertices, a suitable ordering of its edges, and a cyclic permutation of each edge.}
  \end{centering}
\vspace{-10pt}
\label{fig:labeledhypertree}
\end{figure}

For example, let $H$ be the hypergraph in Fig. 2.  The cyclic permutations $c_i$ are given by reading along the indicated direction in cycle notation, so that $c_{3}$ is $(12,1,2)$ with $12 \mapsto 1 \mapsto 2 \mapsto 12$.  The unique path from $1$ to $2$ is just $1, e_3, 2$ since $1$ and $2$ are both contained in $e_3$.  Then $c_3(1) = 2 < c_3(2) = 12$, so $1$ is not an $H$-descent.  The unique path from $2$ to $3$ is given by $2, e_2, 4, e_1, 10, e_4, 13, e_7, 3$ and the edge with the smallest index occurring in this path is $e_1$.  Then $c_1(4) = 10 > c_1(10) = 7$ and so $2$ is an $H$-descent.  Continuing, we find the $H$-descents are $\{2, 6, 8, 10, 12\}$.

Now let $H = (V,E)$ be a hypertree as before with an edge-ordering satisfying \eqref{eq:niceorder} and a cyclic permutation of each edge, but now allow $V$ to be an arbitrary finite set which we will think of as being unordered.  Given a permutation $\pi: V \rightarrow [n] \in \Sm_V$, we will consider $\pi$ a \emph{labeling} of $V$, identifying $v$ with $\pi(v)$.  We then denote the corresponding set of $H$-descents by $\Des_H(\pi)$ and call them the $H$-descents of $\pi$.  With these definitions in hand, we state our main theorem.
\begin{thm}\label{bigthm}
Let $H = (V,E)$ be a hypertree so that $|e|$ is prime for each edge $e \in E$.  Fix an ordering of the edges so $E = \{e_1, \ldots, e_k\}$ with the property that $|(e_{1} \cup \cdots \cup e_{i})\cap e_{i+1}| = 1$ for all $1 \leq i < k$, and also fix a choice of cyclic permutation $c_i: e_i\rightarrow e_i$ of each edge $e_i \in E$. Then $$X_H = \sum_{\pi\in \Sm_V} F_{\Des_H(\pi)}$$ where $\Des_H(\pi)$ is the set of $H$-descents of $\pi$ with respect to the chosen edge-ordering and cyclic permutations.
\end{thm}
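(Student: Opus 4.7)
The plan is to derive the theorem from Corollary \ref{cor:nicer}, which already expresses $X_H = \sum_{\pi \in \Sm_V} F_{\Des(\omega_\pi \pi^{-1})}$ for any choice of linear extension $\omega_\pi$ of the poset $Q(\pi)$. The task therefore reduces to producing, for each $\pi$, a specific linear extension $\omega_\pi$ whose descent set coincides with the combinatorial set $\Des_H(\pi)$.

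My candidate is the labeling $\omega_\pi \colon V \to [n]$ induced by the total order $<_\omega$ on $V$ defined as follows: for distinct $u, v \in V$, take the unique $H$-path $u = u_0, e_{j_1}, u_1, \ldots, e_{j_l}, u_l = v$, choose $r$ minimizing $j_r$, and set $u <_\omega v$ iff $\pi c_{j_r}(u_{r-1}) < \pi c_{j_r}(u_r)$. The equality $\Des(\omega_\pi \pi^{-1}) = \Des_H(\pi)$ then follows immediately from the definitions by specializing to $u = \pi^{-1}(i)$ and $v = \pi^{-1}(i+1)$. Checking that $<_\omega$ extends $Q(\pi)$ is also straightforward: if $x, y$ share a common edge $e_i$, the unique $H$-path between them is $x, e_i, y$, so $<_\omega$ restricts on $e_i$ exactly to the order $\pi c_i(x) < \pi c_i(y)$ that generates $Q(\pi)$; the transitive closure is absorbed once $<_\omega$ itself is shown to be transitive.

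The one genuinely nontrivial step, and the main obstacle, is establishing transitivity of $<_\omega$. The structural input is a ``median'' property of hypertrees: for any three vertices $u, v, w$ and any edge $e$, deleting $e$ splits $V$ into $|e|$ components $C(y)$ indexed by $y \in e$, and $e$ lies on exactly those pairwise $H$-paths among $u, v, w$ whose endpoints fall into distinct components. In particular, $e$ lies on zero, two, or three of the pairwise paths, never on just one, so the globally minimum-index edge $e_m$ appearing on any of the three pairwise paths must lie on at least two of them. Given $u <_\omega v$ and $v <_\omega w$, I would do a case analysis on which pair among $m_{uv}, m_{uw}, m_{vw}$ equals the overall minimum $m$. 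In the cases $m_{uv} = m_{uw}$ or $m_{vw} = m_{uw}$, the edge $e_m$ is traversed through the same pair of vertices with the same orientation on the two relevant paths, so $u <_\omega v$ and $u <_\omega w$ (respectively $v <_\omega w$ and $u <_\omega w$) are governed by the same $\pi c_m$-inequality and the conclusion is immediate. The remaining case $m_{uv} = m_{vw} < m_{uw}$ turns out to be vacuous: $v$ then lies in a different component of $H \setminus \{e_m\}$ from both $u$ and $w$, which share a component, so the $u$-$v$ and $v$-$w$ paths cross $e_m$ through the same pair of its vertices but in opposite directions, making $u <_\omega v$ and $v <_\omega w$ contradict each other via $\pi c_m$. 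The all-equal case $m_{uv} = m_{uw} = m_{vw}$ reduces transitivity of $<_\omega$ directly to transitivity of $<$ on integers.

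Once transitivity is secured, $<_\omega$ supplies the linear extension demanded by Corollary \ref{cor:nicer}, and matching descent sets at consecutive images of $\pi$ finishes the proof. The primality hypothesis on $|e|$ enters only through the earlier corollary, so no new primality argument is required at this stage; the entire burden of this section is the transitivity verification just sketched.
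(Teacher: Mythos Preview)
Your proposal is correct and arrives at exactly the same linear extension $\omega_\pi$ as the paper, but you build it by a different route. The paper defines an insertion operation $\leftarrow$ on total orders and sets $\omega_\pi = (\cdots(\omega_1 \leftarrow \omega_2) \leftarrow \cdots) \leftarrow \omega_k$, where $\omega_i$ is the order on $e_i$ given by $\pi c_i$; this is automatically a total order, and a short induction on $k$ (Lemma~\ref{lemma:path}) then shows it satisfies the path-based description you take as your \emph{definition}. You instead define $<_\omega$ directly by that description and must then verify it is a total order; your median/component argument for transitivity is sound (the key point that the globally minimal edge $e_m$ lies on zero, two, or three of the pairwise paths, and the identification of the entry/exit vertices of $e_m$ with the component representatives, are both correct). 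The trade-off is that the paper's iterative construction sidesteps the case analysis entirely at the cost of introducing the auxiliary operation $\leftarrow$, while your approach is more intrinsic but front-loads the work into the transitivity check. Either way, once $\omega_\pi$ is in hand, both proofs finish identically by reading off $\Des(\omega_\pi\pi^{-1}) = \Des_H(\pi)$ and invoking Corollary~\ref{cor:nicer}.
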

\noindent Note that in the case where $H$ consists of a single edge $e$ with a cyclic permutation $c: e \rightarrow e$, $\Des_H(\pi)$ is exactly $\Des (\pi^{-1} c \pi)$, so Theorem \ref{bigthm} reduces to Corollary \ref{singleedgeFexpansion}.

To prove Theorem \ref{bigthm}, we will need a systematic way of combining total orders together.  Given totally ordered sets $(U, \omega_U)$, $(V, \omega_V)$ where $U,V$ share a single element, say $U \cap V = \{x\}$, we define $\omega_U \leftarrow \omega_V$ to be the total order of the union $U \cup V$ given by ``inserting'' $V$ with its total order $\omega_V$ into the place of $x$ in $U$.  That is, $\omega$ is the unique total order agreeing with $\omega_U,\omega_V$ on $U,V$ so that when $u \in U, v \in V$ we have $u <_\omega v$ if and only if $u <_{\omega_U} x$.  Thus if $U$ consists of elements $u_1 <_{\omega_U} u_2 <_{\omega_U}\cdots <_{\omega_U} u_m$, with $u_i = x$, and $V$ has elements $v_1 <_{\omega_V} v_2 <_{\omega_V}\cdots <_{\omega_V} v_n$, then $\omega = \omega_U \leftarrow \omega_V$ is the total order of $U\cup V$ with $$u_1 <_\omega u_2  <_\omega \cdots  <_\omega u_{i-1}  <_\omega v_1  <_{\omega}\cdots <_{\omega} v_n  <_{\omega} u_{i+1}  <_{\omega} \cdots  <_{\omega} u_m.$$ For example, if $U = \{x <_{\omega_U} b <_{\omega_U} y\}$, $V = \{a <_{\omega_V} b <_{\omega_V} c\}$ are totally ordered sets then $\omega = \omega_U \leftarrow \omega_V$ is the total order $x <_\omega a <_{\omega} b <_{\omega} c <_\omega y$.

We now consider the total orders that arise from repeated insertion.

\begin{lemma}\label{lemma:path}
Let $H = (V,E)$ be a hypertree with $E = \{e_1, \ldots, e_k\}$ so that \eqref{eq:niceorder} holds.  Suppose there is a total order $\omega_i$ on each $e_i$, and define a total order $\omega$ on $V$ by 
$$\omega =  (\cdots(\omega_1 \leftarrow \omega_2) \leftarrow \cdots ) \leftarrow \omega_k.$$
Then for any distinct $x,y \in V$, $x <_\omega y$ if and only $v_r <_{\omega_{j_r}} v_{r+1}$ where 
\begin{align}\label{eq:path}
x = v_1, e_{j_1}, v_2, e_{j_2}, \ldots, v_l, e_{j_l}, v_{l+1} = y
\end{align}
is the unique path from $x$ to $y$ in $H$ and $j_r = \min (j_1, j_2, \ldots, j_l)$.
\end{lemma}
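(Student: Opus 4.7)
My plan is to proceed by induction on $k$, the number of edges of $H$. The base case $k=1$ is immediate: connectedness forces $V = e_1$, the unique path from $x$ to $y$ is $x, e_1, y$ with $j_r = 1$, $v_r = x$, $v_{r+1} = y$, and $\omega = \omega_1$ by definition, so the equivalence is trivial.

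For the inductive step, set $V' = e_1 \cup \cdots \cup e_{k-1}$ and note that $H' = (V', \{e_1, \ldots, e_{k-1}\})$ is itself a hypertree whose edge-ordering still satisfies \eqref{eq:niceorder}. Let $z$ be the unique vertex in $V' \cap e_k$, let $\omega'$ denote the order on $V'$ built inductively from $\omega_1, \ldots, \omega_{k-1}$, and observe that $\omega = \omega' \leftarrow \omega_k$. A crucial preliminary observation is that whenever $x, y \in V'$, the unique $H$-path between them cannot use $e_k$: since $e_k$ meets $V'$ only at $z$, traversing $e_k$ would force the path to visit $z$ twice, contradicting distinctness of its vertices. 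So the $H$-path between such $x, y$ coincides with their $H'$-path.

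I then split into three cases. If $x, y \in V'$, the inductive hypothesis applied to $\omega'$ combined with the fact that the insertion operation preserves the order of $V'$-elements yields the claim directly. If $x, y \in e_k$, the $H$-path is simply $x, e_k, y$ with $j_r = k$, $v_r = x$, $v_{r+1} = y$, and insertion likewise preserves the order of $e_k$-elements. The substantive case is $x \in V' \setminus \{z\}$ and $y \in e_k \setminus \{z\}$ (the opposite orientation being symmetric): here the $H$-path decomposes as $x = v_1, e_{j_1}, \ldots, e_{j_{l-1}}, v_l = z, e_k, y = v_{l+1}$ with $j_1, \ldots, j_{l-1} < k = j_l$, so the minimum index $j_r$ lies in the initial $H'$-segment and both $v_r, v_{r+1} \in V'$. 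The inductive hypothesis applied to the $H'$-path from $x$ to $z$ gives $x <_{\omega'} z$ iff $v_r <_{\omega_{j_r}} v_{r+1}$, while the defining property of $\omega' \leftarrow \omega_k$ gives $x <_\omega y$ iff $x <_{\omega'} z$; chaining these equivalences closes the argument.

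The main obstacle I anticipate is the bookkeeping around the minimum-index step in the mixed case: one must verify that the minimum $j_r$ is strictly less than $k$, so that $v_r, v_{r+1}$ really do lie in $V'$ and the inductive hypothesis applies. This follows automatically from $j_l = k$ being the largest index in the path together with $l \geq 2$ (which is forced by $x \notin e_k$), but it is the one place where care is needed to ensure the induction is legitimate.
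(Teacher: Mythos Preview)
Your proof is correct and follows essentially the same inductive argument as the paper: induct on $k$, strip off $e_k$, and in the mixed case reduce the comparison $x <_\omega y$ to a comparison with the unique intersection vertex $z$ via the definition of insertion, then apply the inductive hypothesis to the remaining $H'$-path. The only cosmetic difference is that the paper orients the mixed case with $x \in e_k \setminus V'$ and $y \in V' \setminus e_k$ (so the first edge of the path is $e_k$), whereas you take $x \in V'$ and $y \in e_k$ (so the last edge is $e_k$); the two are symmetric and the reasoning is identical.
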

\begin{proof}
We proceed by induction on the number of edges of $H$.  If $H$ has only one edge, the statement is trivial, so suppose that the statement holds for hypertrees with fewer than $k$ edges and that $H$ has exactly $k$ edges.  Let $H' = (V', E')$ be the hypertree with $V' = e_1 \cup \cdots \cup e_{k-1}$ and $E' = E\backslash \{e_k\}$, and let $\omega'$ be the total order on $V'$ given by $$\omega' =  (\cdots(\omega_1 \leftarrow \omega_2) \leftarrow \cdots ) \leftarrow \omega_{k-1},$$ so that $\omega = \omega' \leftarrow \omega_k$.  If both $x$ and $y$ are in $V'$ then we are done by the inductive hypothesis.  Similarly if $x,y \in e_k$ then there is nothing to show.  So assume that $x \in e_k \backslash V'$ and $y \in V' \backslash e_k$ and let \eqref{eq:path} be the path from $x$ to $y$, so that $\{v_2\} = \left(e_1 \cup \cdots \cup e_{k-1} \right) \cap e_{k}$.   From the definition of the insertion $\omega' \leftarrow \omega_k$ we have $x <_{\omega} y$ if and only if $v_2 <_{\omega'} y$.  Then $$ v_2, e_{j_2}, \ldots, v_l, e_{j_l}, v_{l+1}$$ is the unique path from $v_2$ to $y$ in $H'$ and clearly $j_r = \min (j_1, j_2, \ldots, j_l) = \min (j_2, \ldots, j_l)$ since $j_1 = k$ is the highest index of any edge in $H$.  Thus by our inductive hypothesis we see that $v_2 <_{\omega'} y$ is equivalent to $v_r <_{\omega_{j_r}} v_{r+1}$.
\end{proof}

\begin{proof}[Proof of Theorem \ref{bigthm}]
Given a bijection $\pi: V \rightarrow [n]$, let $P(\pi)$ and $Q(\pi)$ be as in the statement of Corollary \ref{cor:nicer}.  For each $i$ let $\omega_i$ be the total order of $e_i$ given by restricting $Q$ to $e_i$, so that $x <_{\omega_i} y$ in $e$ if $\pi c_i(x) < \pi c_i(y)$, and let $\omega_\pi$ be the total order on $V$ given by $\omega_\pi =  (\cdots(\omega_1 \leftarrow \omega_2) \leftarrow \cdots ) \leftarrow \omega_k.$  Then $\omega_\pi$ is a linear extension of $Q(\pi)$, and by Lemma \ref{lemma:path} we see that $i$ is a descent of $\omega_\pi \pi^{-1}$ if and only if $v_r >_{\omega_{j_r}} v_{r+1}$, that is, $\pi c_{j_r} (v_r) > \pi c_{j_r}(v_{r+1})$ where $j_r$ is the least-index edge in the path $\pi^{-1}(i) = v_1, e_{j_1}, \ldots, e_{j_l}, v_{l+1} = \pi^{-1}(i+1)$ from $\pi^{-1}(i)$ to $\pi^{-1}(i+1)$.  After identifying $v$ with $\pi(v)$, the descents of $\omega_\pi \pi^{-1}$ become the $H$-descents, so that $\Des_H(\pi) = \Des(\omega \pi^{-1})$.  Applying Corollary \ref{cor:nicer} then finishes the proof.
\end{proof}

\end{section}
\begin{section}{Suggestions for future work}\label{sec:end}
It is likely that the condition that the edges have prime size could be removed.  A closer examination of the proof of $F$-positivity (Theorem \ref{thm:Fpos}) reveals that it does not depend on primality \emph{per se}, but only on the existence of partitions of colorings of the form
\begin{align}\label{coxeterpartition}
\{\chi: [e_i] \rightarrow \PP: \chi \text{ not constant}\} = \biguplus_{\pi \in \Sm_{e_i}} A(\pi, S(\pi)). 
\end{align}
for each edge $e_i$ of $H$, where $S(\pi) \subseteq [n-1]$ is a choice of subset for each $\pi \in \Sm_{e_i}$, with $n = |e_i|$.  We need only to give the role played by the maps $\pi_i c_i$ in the proof of Theorem \ref{thm:Fpos} to an appropriate choice of bijections $\omega_{\pi_i} \in \Sm_{e_i}$ so that $\Des( \omega_{\pi_i} \pi_i^{-1}) = S( \pi )$ for each $\pi_i \in S_{e_i}$.  Thus we have:
\begin{thm}
Suppose that $H = (V,E)$ is a hypertree so that for each $e \in E$ there is a partition of the form \eqref{coxeterpartition}.  Then $X_H$ is $F$-positive.
\end{thm}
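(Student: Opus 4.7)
The plan is to mirror the proof of Theorem \ref{thm:Fpos} almost verbatim, making only the substitution the authors outline. The entire argument for Theorem \ref{thm:Fpos} uses primality only through the existence of the edge-local decomposition \eqref{smallunion}, and our hypothesis gives us a decomposition \eqref{coxeterpartition} of the same shape. The one piece of structure that needs to be replaced is the specific labeling $\pi_i c_i$ used in \eqref{smallunion}; we need bijections $\omega_{\pi_i}: e_i \rightarrow [|e_i|]$ such that $\Des(\omega_{\pi_i} \pi_i^{-1}) = S(\pi_i)$ for every $\pi_i \in \Sm_{e_i}$.

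First, I would observe that such $\omega_{\pi_i}$ always exist. For any $n$ and any $S \subseteq [n-1]$, there is at least one permutation $\sigma \in \Sm_n$ with $\Des(\sigma) = S$ (a standard construction: read $1, 2, \ldots, n$ and reverse the runs dictated by $S$). Given $\pi \in \Sm_e$ with $n = |e|$, pick such a $\sigma$ with $\Des(\sigma) = S(\pi)$ and set $\omega_\pi = \sigma \pi$; then $\omega_\pi \pi^{-1} = \sigma$ has descent set $S(\pi)$.

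Having fixed a choice of $\omega_{\pi_i}$ for each edge $e_i$ and each $\pi_i \in \Sm_{e_i}$, I would repeat the argument of Theorem \ref{thm:Fpos} step by step with $\omega_{\pi_i}$ in place of $\pi_i c_i$. Define the total order $P_{\pi_i}$ on $e_i$ by $x <_{P_{\pi_i}} y \iff \pi_i(x) < \pi_i(y)$. Since $\pi_i$ is the unique linear extension of $P_{\pi_i}$, Theorem \ref{fund} applied to the single edge yields
\[
A(\pi_i, S(\pi_i)) \;=\; A(\pi_i, \Des(\omega_{\pi_i} \pi_i^{-1})) \;=\; \{(P_{\pi_i}, \omega_{\pi_i})\text{-partitions}\}.
\]
Combining this with the hypothesized decomposition \eqref{coxeterpartition} gives the disjoint union \eqref{bigunion} of $PC(H)$ over tuples $(\pi_1, \ldots, \pi_k)$. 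Applying Lemma \ref{lemma:compatible} to the families $\{P_{\pi_i}\}$ and $\{\omega_{\pi_i}\}$ (each $\omega_{\pi_i}$ being a total order on $e_i$, hence a legitimate poset) yields global posets $P_{\pi_1, \ldots, \pi_k}$ and $Q_{\pi_1, \ldots, \pi_k}$ on $V$; pick any linear extension $\omega_{\pi_1, \ldots, \pi_k}$ of the latter. The local-to-global argument of Theorem \ref{thm:Fpos} then shows that $A(\pi_1, \ldots, \pi_k)$ is exactly the set of $(P_{\pi_1, \ldots, \pi_k}, \omega_{\pi_1, \ldots, \pi_k})$-partitions. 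Applying Theorem \ref{fund} once more globally rewrites this as a disjoint union $\biguplus_\sigma A(\sigma, \Des(\omega_{\pi_1,\ldots,\pi_k}\sigma^{-1}))$ over linear extensions $\sigma$ of $P_{\pi_1,\ldots,\pi_k}$, whose quasisymmetric generating function is a nonnegative sum of $F_S$'s. Summing over all tuples $(\pi_1, \ldots, \pi_k)$ finishes the proof.

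The only step that requires any real attention is the local-to-global gluing: verifying that a coloring $\chi$ satisfying the per-edge condition ``$\chi|_{e_i}$ is a $(P_{\pi_i}, \omega_{\pi_i})$-partition'' is automatically a global $(P_{\pi_1, \ldots, \pi_k}, \omega_{\pi_1, \ldots, \pi_k})$-partition. Inspecting the proof of Theorem \ref{thm:Fpos} shows that this step never used the specific form $\omega_{\pi_i} = \pi_i c_i$: it relied only on the hypertree property (so that every covering relation in the transitive closures $P$ and $Q$ reduces to a relation internal to some single edge $e_i$) and Lemma \ref{lemma:compatible}. Hence the substitution goes through with no modification, and the hypertree case reduces cleanly to the edge-local hypothesis \eqref{coxeterpartition}.
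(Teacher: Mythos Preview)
Your proposal is correct and follows exactly the approach the paper indicates: the paper does not give a standalone proof of this theorem but rather observes that the proof of Theorem \ref{thm:Fpos} only uses primality through the edge-local decomposition, and that one need only replace the labelings $\pi_i c_i$ by arbitrary bijections $\omega_{\pi_i}$ with $\Des(\omega_{\pi_i}\pi_i^{-1}) = S(\pi_i)$. You have filled in the one detail the paper leaves implicit, namely that such $\omega_{\pi_i}$ always exist for any prescribed descent set, and you correctly note that the local-to-global gluing step never uses the particular form of $\omega_{\pi_i}$.
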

The fact that a partition of the form \eqref{coxeterpartition} exists when $n= |e_i|$ is prime gives a proof of $F$-positivity of hypertrees with prime-sized edges.  In fact, such a partition of the nonconstant colorings of a set of $n=4$ elements does exist as well; it was found with a search algorithm using the software package Sage \cite{sage}.  Finding such a partition for each $n$ would then constitute a proof of the following.
\begin{conj}\label{conj:hypertreeconjecture}
Let $H$ be a hypertree.  Then $X_H$ is $F$-positive.  
\end{conj}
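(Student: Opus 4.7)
By the theorem stated immediately before Conjecture \ref{conj:hypertreeconjecture}, it suffices to exhibit, for every integer $n \geq 2$, a partition of the nonconstant colorings $\chi: [n] \to \PP$ of the form \eqref{coxeterpartition}. The cases $n$ prime (Theorem \ref{partition}) and $n = 4$ (computer search) already handle a range of small examples and strongly suggest such partitions exist uniformly; the plan is to construct one for every $n$.

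The first step is to extend the cyclic standardization map $\cstd$ of Section \ref{sec:singleedge} beyond primes. For $w \in \PP^n$ that is primitive (not equal to $u^k$ for any shorter word $u$ and any $k > 1$), all $n$ rotations of $w$ are distinct, and the proof of Theorem \ref{partition} goes through essentially verbatim: setting $\cstd(w) = \pi$ with $\pi(i)$ equal to the lex-rank of $r_i(w)$, the primitive words land in $A(\pi, S(\pi))$ with $S(\pi) = \Des(\pi c \pi^{-1})$, where $c : i \mapsto i+1 \pmod n$ is the standard cyclic shift. The genuine obstruction is confined to imprimitive words $w = u^{n/d}$ with $u$ primitive of length $d$ and $1 < d < n$, whose $n$ rotations collapse into $d$ equivalence classes of size $n/d$, so lexicographic sorting alone fails to produce a bijection to $[n]$.

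The second step, which is the main obstacle, is to define $\cstd$ on imprimitive words so that the preimage of each $\pi \in \Sm_n$ is exactly $A(\pi, S(\pi))$ for a suitable $S(\pi)$. My proposal is a recursive approach: apply $\cstd$ to the primitive root $u$ to get $\sigma \in \Sm_d$, and lift $\sigma$ to a permutation of $\Sm_n$ by a prescribed interleaving rule that distributes each rank $\sigma(j)$ across the $n/d$ positions $j, j+d, \ldots, j+(n/d-1)d$ in a canonical order, with $S(\pi)$ read off from $\sigma$ together with this lifting. Since any partition for length $d < n$ is available by induction and the prime case supplies the base, this sets up a recursion on $n$. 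The rigidity of the descent-class structure $A(\pi, S)$ means the interleaving rule cannot be arbitrary: the resulting $S(\pi)$ must describe precisely those consecutive positions along $\pi$ where any $w$ in the preimage strictly increases, and the inductive comparison of $r_{x+1}(w)|_m$ with $r_{y+1}(w)|_m$ used in the proof of Theorem \ref{partition} can degenerate to equality when $w$ is periodic. Verifying compatibility of the tiebreaker with the descent-class structure is where essentially all the difficulty of the conjecture lies, and it is the step I expect to require substantial new combinatorial input beyond what is developed in the paper.

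A successful construction along these lines would then immediately upgrade Theorem \ref{bigthm}: the gluing argument of Theorem \ref{thm:Fpos} together with the edge-ordering of Lemma \ref{lemma:niceorder} would produce an explicit $H$-descent statistic for all hypertrees, not merely those with prime-sized edges, and would confirm Conjecture \ref{conj:hypertreeconjecture} with an explicit combinatorial expansion.
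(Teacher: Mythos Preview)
This statement is a \emph{conjecture} in the paper, not a theorem: the paper does not prove it, and explicitly says that finding a partition of the form \eqref{coxeterpartition} for each $n$ ``would then constitute a proof'' of it. So there is no proof in the paper to compare against; your proposal is an attack on an open problem.

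As written, your proposal is a strategy outline rather than a proof, and you say so yourself: the recursive lifting of $\cstd$ from the primitive root to the full word is where ``essentially all the difficulty of the conjecture lies,'' and you do not carry it out. That is an honest assessment, but it means the conjecture remains unproved by your argument. One point worth sharpening: you cannot simply keep $S(\pi)=\Des(\pi c\pi^{-1})$ for the permutations hit by primitive words and only introduce new pairs $(\pi,S(\pi))$ for the imprimitive ones. Every $\pi\in\Sm_n$ already arises as $\cstd(w)$ for some primitive $w$ (take $w$ with distinct letters), so the primitive case alone forces $S(\pi)=\Des(\pi c\pi^{-1})$ for \emph{all} $\pi$; but then, for instance with $n=4$, the word $1212$ lies in none of the sets $A(\pi,\Des(\pi c\pi^{-1}))$. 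Hence any successful scheme must \emph{change} $S(\pi)$ for some $\pi$, which in turn reshuffles primitive words among the blocks, so the ``primitive case goes through verbatim'' and the ``imprimitive case is handled separately'' cannot be decoupled in the way your outline suggests. The paper's remark that the $n=4$ partition was found by computer search, rather than by any such recursion, is consistent with this difficulty.
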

We can rephrase this idea in terms of \emph{simplicial complexes}. A simplicial complex $\Delta$ is a family of subsets of a finite vertex set $V$ so that if $F \in \Delta$ and $F' \subseteq F$ then $F' \in \Delta$.  If $\Delta$ is a simplicial complex and $S\subseteq \Delta$ is any subset of $\Delta$, then $S$ is a \emph{partial simplicial complex} and we say that $S$ is \emph{partitionable} if $S$ is a disjoint union $$S = \biguplus_i [G_i, F_i]$$ where the $F_i$ are facets (maximal faces) of $\Delta$, $G_i \subseteq F_i$, and $[G_i, F_i] = \{ F \in \Delta: G_i \subseteq F \subseteq F_i\}$.  Then the existence of a partition of the nonconstant colorings of the form \eqref{coxeterpartition} when $|e_i| = n$ is equivalent to the statement that $\Delta_n \backslash \{\emptyset\}$ is partitionable where $\Delta_n$ is the \emph{Coxeter complex} of type $A_{n-1}$, a simplicial complex whose facets are in natural bijection with permutations $\pi \in \Sm_n$.  The problem of partitionability for a partial simplicial complex $S \subseteq \Delta_n$ is discussed by Breuer and Klivans in \cite{scheduling}, where $S$ is thought of as a \emph{scheduling problem}.

The \emph{Schur functions} form an important basis of $\Sym$ with deep connections to representation theory of the symmetric and general linear groups.  The optimistic reader might hope that chromatic symmetric functions of hypertrees would in fact be Schur-positive, but this is not the case even for ordinary graphs.  For example, if $C = (V,E)$ is the ``claw'' with $V= \{1,2,3,4\}, E = \{\{1,2\}, \{1,3\}, \{1,4\}\}$ then $X_C$ is not Schur-positive.  Stanley has conjectured in \cite{chromaticsymmetric} that if $G$ is \emph{clawfree} then $X_G$ is Schur-positive, where a graph $G$ is clawfree when it has no induced subgraphs isomorphic to the claw $C$. 

It would be interesting to generalize Stanley's conjecture to hypergraphs, but we do not attempt that here.  Instead we offer a more modest conjecture.  We say that a hypergraph $H$ is an \emph{interval hypergraph} if it is isomorphic to a hypergraph $(V,E)$ where $V = [n]$ and each edge $e \in E$ is an interval $e = \{i,i+1, \ldots, j\}$.  Recall that a hypergraph $H$ is linear if $|e \cap e'| \leq 1$ for each pair of distinct edges $e, e'$. For example, if $V = [9]$ and $E = \{\{1,2,3\}, \{3,4,5\},\{5,6\}, \{6,7,8,9\} \}$ then $H = (V,E)$ is an interval hypergraph that is linear.
\begin{conj}\label{verylinear}
If $H$ is a linear interval hypergraph then $X_H$ is Schur-positive.
\end{conj}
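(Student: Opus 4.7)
The plan is to proceed by induction on the number of edges $k$ of $H$, using the fact that a linear interval hypergraph is assembled by successively appending edges, each overlapping the existing structure in a single vertex (namely, its leftmost). The base case $k = 1$ is the single-edge symmetric function $X_e = p_1^n - p_n$. Expanding in the Schur basis gives $p_1^n - p_n = \sum_\lambda (f^\lambda - \chi^\lambda_{(n)}) s_\lambda$, where $f^\lambda$ is the dimension of the irreducible $\Sm_n$-representation indexed by $\lambda$ and $\chi^\lambda_{(n)}$ is the value of its character on an $n$-cycle; each coefficient is nonnegative because any character value is bounded in absolute value by the dimension. Equivalently, $(p_1^n - p_n)/n$ is the Frobenius characteristic of the Schur-positive $\Sm_n$-module given by the multilinear component of the free Lie algebra.

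For the inductive step, let $v$ be the unique vertex shared between $e_k$ and the smaller linear interval hypergraph $H' = (V \setminus (e_k \setminus \{v\}), E \setminus \{e_k\})$. Partitioning the proper colorings of $H$ according to the color assigned to $v$ gives an identity of the form
\begin{equation*}
X_H = \sum_{j \geq 1} X_{H'}^{(v = j)} \cdot (p_1^{n_k - 1} - x_j^{n_k - 1}),
\end{equation*}
where $X_{H'}^{(v = j)}$ is the generating function for proper colorings of $H'$ with $v$ colored $j$ and $n_k = |e_k|$. The goal is to regroup the right-hand side so that Schur-positivity is manifest, for instance by extracting a single $\Sm_{n_k}$-equivariant factor representing the contribution of the new edge and combining it with the inductive object $X_{H'}$ via a plethysm or an induced-representation construction.

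The main obstacle is that gluing two hypergraphs along a single vertex does not correspond to any obvious Schur-positivity-preserving operation on $\Sym$. Disjoint union induces the ordinary product of chromatic symmetric functions, and products of Schur-positive functions are Schur-positive by the Littlewood--Richardson rule, but the shared vertex entangles the two symmetric group actions in a way that is not captured by a product. The cleanest route, I expect, is representation-theoretic: construct a graded $\Sm_n$-module whose Frobenius characteristic equals $X_H$, built inductively by attaching a free-Lie-algebra-type module along each shared vertex. Alternatively, assuming an extension of Conjecture \ref{conj:hypertreeconjecture} to non-prime edges, one could try to refine the $H$-descent combinatorics of Section \ref{sec:comb} so that the resulting $F$-positive expansion partitions into blocks indexed by standard Young tableaux via the identity $s_\lambda = \sum_{T \in \mathrm{SYT}(\lambda)} F_{\Des(T)}$. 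Either route requires a genuinely new construction tailored to the linear interval hypothesis, and producing one is in my view the heart of the conjecture.
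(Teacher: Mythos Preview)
The statement you are attempting to prove is Conjecture~\ref{verylinear}, which the paper explicitly leaves open; there is no proof in the paper to compare your attempt against. So the relevant question is simply whether your proposal constitutes a proof, and it does not --- nor do you claim it does, since you end by saying that producing the needed construction ``is in my view the heart of the conjecture.''

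Your base case is fine: the identity $p_1^n - p_n = \sum_\lambda (f^\lambda - \chi^\lambda_{(n)}) s_\lambda$ is correct, and the bound $|\chi^\lambda_{(n)}| \le f^\lambda$ gives Schur-positivity for a single edge. The decomposition in the inductive step,
\[
X_H = \sum_{j \ge 1} X_{H'}^{(v=j)} \cdot \bigl(p_1^{\,n_k - 1} - x_j^{\,n_k - 1}\bigr),
\]
is also valid as a formal identity. But, as you yourself observe, the individual summands are not symmetric, and there is no evident way to regroup them into a Schur-positive expression. Your two suggested routes --- building an $\Sm_n$-module whose Frobenius characteristic is $X_H$, or refining the $H$-descent statistic so that the $F$-expansion groups into Schur functions via $s_\lambda = \sum_{T} F_{\Des(T)}$ --- are reasonable research directions, but neither is carried out, and each would be a substantial new result. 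In particular, the second route presupposes a resolution of Conjecture~\ref{conj:hypertreeconjecture} for non-prime edges, which is itself open.

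In short: you have correctly located the obstruction (single-vertex gluing is not a Schur-positivity-preserving operation in any known sense) and sketched plausible strategies, but what you have written is a research outline rather than a proof. The paper offers nothing further on this point; the conjecture remains open.
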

\noindent Note that connected linear interval hypergraphs are hypertrees, so they are at least $F$-positive when they have prime-sized edges.  In the case when $H=G$ is a linear interval hypergraph that is in an ordinary graph, $G$ is just a disjoint union of paths; in that case $X_G$ was shown to be $e$-positive by Stanley \cite{chromaticsymmetric} and hence Schur-positive.

Conjecture \ref{verylinear} was motivated by the study of \emph{formal group laws}. A one-dimensional, commutative formal group law in characteristic $0$ is equivalent to a formal power series of the form 
\begin{align}\label{fgl}
f(f^{-1}(x_1) + f^{-1}(x_2) + \cdots)
\end{align}
when $f(x)$ is a formal power series in one variable $x$ with $f(0) = 0, f'(0) = 1$. In \cite{FGLs}, the author gives a number of examples of generating functions $f(x)$ for which the formal group law \eqref{fgl} can be written as a sum of chromatic symmetric functions of certain linear interval hypergraphs.

\end{section}
\section*{Acknowledgements}
The author is indebted to Ira Gessel for suggesting this line of research and for many helpful ideas along the way.  We would also like to thank Sara Billey, Felix Breuer, Caroline Klivans,  Brendan Pawlowski, Jos\'{e} Samper, Josh Swanson and Greg Warrington for their useful insights and feedback.

\end{document}